\newcommand{\R}{\mathbb{R}}
\def\hide #1 {}
\long\def\longhide #1 {}
\theoremstyle{plain}
\newtheorem{theorem}{Theorem}[section]
\newtheorem*{theorem*}{Theorem}
\newtheorem{lemma}[theorem]{Lemma}
\newtheorem{proposition}[theorem]{Proposition}
\theoremstyle{definition}
\newtheorem{remark}[theorem]{Remark}
\newtheorem{definition}[theorem]{Definition}
\theoremstyle{definition}
\newcommand{\E}{\mathbb{E}}
\newcounter{nootje}
\renewcommand{\check}[1]
  {\marginpar{\tiny\begin{minipage}{20mm}\begin{flushleft}\thenootje : #1\end{flushleft}\end{minipage}}\addtocounter{nootje}{1}}
\newcommand{\sjoerd}[1]{{#1}}
\newcommand{\sjoerdnew}[1]{{#1}}
\title{A resolution of the Gaussian hyperplane tessellation conjecture \\on the sphere}
\author{\begin{tabular}{ccc}
  Sjoerd Dirksen\thanks{Mathematical Institute, Utrecht University. {\sl Email}: s.dirksen@uu.nl.} & \quad \quad \quad \quad & Nigel Q. D. Strachan\thanks{Mathematical Institute, Utrecht University. {\sl Email}: n.q.d.strachan@uu.nl. }
  \end{tabular}
}
\date{}
\begin{document}
\maketitle
\begin{abstract}
\noindent 
We investigate how many hyperplanes with independent standard Gaussian directions one needs to produce a $\delta$-uniform tessellation of a subset $S$ of the Euclidean sphere, meaning that for any pair of points in $S$ the fraction of hyperplanes separating them corresponds to their geodesic distance up to an additive error $\delta$. \sjoerdnew{It} was conjectured that $\delta^{-2}w_*(S)^2$ Gaussian random hyperplanes are necessary and sufficient \sjoerdnew{for this purpose}, where $w_*(S)$ is the Gaussian complexity \sjoerdnew{of $S$}. We falsify this conjecture by constructing a set $S$ where $\delta^{-3}w_*(S)^2$ Gaussian hyperplanes are necessary and sufficient.
\end{abstract}

\section{Introduction}
In this paper we study uniform tessellations of subsets of the sphere. To any collection of vectors $g_1,\ldots,g_m\in \R^n$ we can associate the collection of homogenous hyperplanes 
$$g_i^{\perp} = \{x\in \R^n \ : \ \langle x, g_i\rangle=0\}, \qquad i=1,\ldots,m.$$
Each hyperplane naturally induces two halfspaces and hence, for any subset $S \subset \mathbb{S}^{n-1}$, the hyperplanes tessellate $S$. Letting $G\in \R^{m\times n}$ be the matrix with rows $g_i$, for any given $x\in S$ the vector
$$\mathrm{sign}(Gx):=(\mathrm{sign}(\langle x, g_i\rangle))_{i=1}^m \in \{-1,1\}^m$$
encodes in which cell of the tessellation $x$ is located. We say that the tessellation of $S$ is \emph{uniform} if, for any pair of points in $S$, the fraction of hyperplanes separating them corresponds to their distance up to a small error. 
\begin{definition}
Let $S \subset \mathbb{S}^{n-1}$ and $\delta > 0$. We say that $G\in \R^{m\times n}$ induces a \emph{$\delta$-uniform tessellation of $S$} if, for all $x,y \in S$,
$$\left|d_H(\mathrm{sign}(Gx),\mathrm{sign}(Gy)) - d_{\mathbb{S}^{n-1}}(x,y)\right| \leq \delta,$$
where $d_H(a,b) := \frac{1}{m} \sum_{i = 1}^m 1_{\{a_i \neq b_i\}}$ is the normalized Hamming distance on $\{-1,1\}^m$ and $d_{\mathbb{S}^{n-1}}(u,w) := \frac{1}{\pi} \arccos(\langle u, w\rangle)$ is the normalized geodesic distance on $\mathbb{S}^{n-1}$. 
\end{definition}
Uniform hyperplane tessellations were studied in depth by Plan and Vershynin in their seminal work \cite{plan2014dimension}. They derived a sufficient condition for the number of i.i.d.\ standard Gaussian hyperplanes that guarantees a uniform tessellation with high probability.  Their result is stated in terms of the \emph{Gaussian complexity} of a set $T\subset \R^n$ defined by 
$$w_*(T) := \mathbb{E} \left(\sup_{x \in T} |\langle x, g \rangle|\right),$$
where $g$ is standard Gaussian. 
\begin{theorem}[{\cite[Theorem~1.5]{plan2014dimension}}]
\label{thm:PVmain}
There exist constants $C,c>0$ such that the following holds. Let $S \subset \mathbb{S}^{n-1}$, $\delta > 0$, and let $G\in \R^{m\times n}$ be standard Gaussian. If 
$$m \geq C\delta^{-6} w_*(S)^2,$$
then $G$ induces a $\delta$-uniform tessellation of $S$ with probability at least $1 - 2 \exp(-c\delta^2m)$.
\end{theorem}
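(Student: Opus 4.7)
The plan is to combine a pointwise concentration estimate with a net-based discretization of $S$, the main difficulty being the extension from the net to all of $S$ caused by the discontinuity of the sign function. For a fixed pair $x, y \in \mathbb{S}^{n-1}$, the vector $(\langle g_i, x\rangle, \langle g_i, y\rangle)$ is a centered bivariate Gaussian with correlation $\langle x, y\rangle$, so the classical orthant probability identity gives
\[
\mathbb{P}\bigl[\mathrm{sign}\langle g_i, x\rangle \neq \mathrm{sign}\langle g_i, y\rangle\bigr] = \frac{\arccos\langle x, y\rangle}{\pi} = d_{\mathbb{S}^{n-1}}(x, y).
\]
Hence $d_H(\mathrm{sign}(Gx), \mathrm{sign}(Gy))$ is the empirical mean of $m$ i.i.d.\ Bernoulli random variables with mean $d_{\mathbb{S}^{n-1}}(x, y)$, and Hoeffding's inequality yields the pointwise concentration
\[
\mathbb{P}\bigl[\bigl|d_H(\mathrm{sign}(Gx),\mathrm{sign}(Gy)) - d_{\mathbb{S}^{n-1}}(x,y)\bigr| > \delta/3\bigr] \leq 2\exp(-c\delta^2 m).
\]

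Next I would pass from pointwise to uniform control on a discretization. Let $\mathcal{N}\subset S$ be a Euclidean $\eta$-net with $\eta$ a parameter to be chosen below. Sudakov minoration gives $\log|\mathcal{N}| \leq C\,w_*(S)^2/\eta^2$, and a union bound over $\mathcal{N}\times \mathcal{N}$ combined with the pointwise concentration yields the $\delta/3$-uniform estimate on the net as soon as $\delta^2 m \gtrsim w_*(S)^2/\eta^2$, the failure probability remaining $\exp(-c\delta^2 m)$.

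The main obstacle is extending from $\mathcal{N}$ to all of $S$. For $x \in S$ with nearest net point $x'$, the implication $\mathrm{sign}\langle g_i, x\rangle \neq \mathrm{sign}\langle g_i, x'\rangle \Rightarrow |\langle g_i, x\rangle| \leq |\langle g_i, x-x'\rangle|$ bounds the jump in the Hamming distance by the fraction of hyperplanes lying in a thin slab around $x^\perp$. To handle this uniformly in $x \in S$, I would introduce a Lipschitz surrogate $\phi_s$ for the slab indicator $\mathbf{1}\{|\cdot|\leq s\}$ with Lipschitz constant $\sim 1/s$, analyse the empirical process $x \mapsto \tfrac{1}{m}\sum_i \phi_s(\langle g_i, x\rangle)$ via Dudley/Talagrand chaining (its increments being sub-Gaussian at scale $\|x-y\|/(s\sqrt{m})$) together with Gaussian concentration, and separately handle by a Gaussian tail bound the indices where $|\langle g_i, x-x'\rangle|>s$. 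A second non-trivial restriction comes from the geometry of the target: the geodesic distance $d_{\mathbb{S}^{n-1}}$ is only $\tfrac{1}{2}$-H\"older with respect to the Euclidean distance on the sphere (because $\arccos(1-u) \sim \sqrt{2u}$ near $u=0$), so $|d_{\mathbb{S}^{n-1}}(x,y) - d_{\mathbb{S}^{n-1}}(x',y')| \lesssim \sqrt{\eta}$ under $\eta$-perturbations.

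Balancing the three error terms produces the claimed exponent. The H\"older continuity of $\arccos$ forces $\sqrt{\eta} \lesssim \delta$, i.e.\ $\eta \lesssim \delta^2$, which combined with the net constraint $\delta^2 m \gtrsim w_*(S)^2/\eta^2$ yields $m \gtrsim w_*(S)^2/\delta^6$, exactly as stated; the slab-fraction bound is easily made compatible with this choice of $\eta$ by taking $s$ of an appropriate power of $\delta$. The failure probability remains $\exp(-c\delta^2 m)$. I expect the H\"older bottleneck to be the main source of the gap between the naive $\delta^{-2}$ guess and the actual exponent, and indeed the paper's main result shows that some such degradation (at least to $\delta^{-3}$) is genuinely unavoidable.
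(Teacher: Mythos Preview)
The paper does not prove this theorem---it is quoted from \cite{plan2014dimension} as background---so there is no in-paper argument to compare against. Your overall architecture (pointwise Hoeffding, Sudakov net, soft-indicator extension) is the standard route, but your identification of the $\delta^{-6}$ bottleneck contains a genuine error.

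You assert that $d_{\mathbb{S}^{n-1}}$ is only $\tfrac12$-H\"older with respect to the Euclidean distance on the sphere, forcing $\eta\lesssim\delta^2$. This is false: the geodesic distance is \emph{Lipschitz} in the chordal distance. By the triangle inequality for the metric $d_{\mathbb{S}^{n-1}}$,
\[
\bigl|d_{\mathbb{S}^{n-1}}(x,y)-d_{\mathbb{S}^{n-1}}(x',y')\bigr|\le d_{\mathbb{S}^{n-1}}(x,x')+d_{\mathbb{S}^{n-1}}(y,y'),
\]
and since for unit vectors $\langle x,x'\rangle=1-\tfrac12\|x-x'\|_2^2$, one has $\arccos\langle x,x'\rangle=\arccos\bigl(1-\tfrac12\|x-x'\|_2^2\bigr)\sim\|x-x'\|_2$, not $\sqrt{\|x-x'\|_2}$. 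This is exactly the content of Lemma~\ref{fct:almostlip}. You have conflated the $\tfrac12$-H\"older behaviour of $t\mapsto\arccos t$ near $t=1$ with that of $x\mapsto d_{\mathbb{S}^{n-1}}(x,y)$; the sphere constraint forces the inner product to move only quadratically in $\|x-x'\|_2$ when $x$ and $x'$ are close, and this exactly cancels the square root. Consequently the geodesic comparison only imposes $\eta\lesssim\delta$, and your bookkeeping then yields $m\gtrsim\delta^{-4}w_*(S)^2$, not $\delta^{-6}$. The extra losses in \cite{plan2014dimension} arise in their particular treatment of the extension step (uniform control of the number of hyperplanes separating $x$ from its net point), which you have only sketched; a careful execution of the slab/chaining idea you outline would in fact land closer to the $\delta^{-3}$ of Theorem~\ref{thm:Oymak-Recht} than to $\delta^{-6}$, so your sketch does not actually account for the stated exponent.
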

The authors of \cite{plan2014dimension} remarked that the dependence on $\delta$ in the previous result is sub-optimal and that they did not try to optimize the dependence. An improvement was obtained by Oymak and Recht in \cite{oymak2015near}. To formulate their result, let $N(T;\varepsilon)$ be the $\varepsilon$-covering number of a set $T \subset \mathbb{R}^n$ with respect to the Euclidean distance and let $B_2^n(0;\varepsilon)$ be the Euclidean ball in $\R^n$ with center $0$ and radius $\varepsilon$.
\begin{theorem}[{\cite[Theorem~2.5]{oymak2015near}}]
\label{thm:Oymak-Recht}
There exist constants $C,c_1,c_2>0$ such that the following holds. Let $S \subset \mathbb{S}^{n-1}$, $\delta > 0$, and let $G\in \R^{m\times n}$ be standard Gaussian. If $\varepsilon = c_1\frac{\delta}{\log(\delta^{-1})}$ and
$$m \geq C\delta^{-2} \log(N(S;\varepsilon)) + C\delta^{-3}w_*((S -S) \cap B_2^n(0;\varepsilon))^2,$$
then $G$ induces a $\delta$-uniform tessellation of $S$ with probability at least $1 - 2\exp(-c_2\delta^2 m)$. 
\end{theorem}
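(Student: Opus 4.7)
The proof follows the net-plus-oscillation scheme underlying Theorem~\ref{thm:PVmain}, refined by replacing Plan--Vershynin's first-moment estimate of sign flips by a second-moment ($\chi^2$-type) control. Fix an $\varepsilon$-net $\mathcal{N}_\varepsilon \subset S$ of minimal cardinality $N(S;\varepsilon)$, and for $x \in S$ let $\bar{x}\in\mathcal{N}_\varepsilon$ denote a nearest net point, so $\|x-\bar{x}\|\leq\varepsilon$. The triangle inequality reduces the $\delta$-uniform tessellation claim to bounding (i) the net deviation $\sup_{u,v\in\mathcal{N}_\varepsilon}|d_H(\mathrm{sign}(Gu),\mathrm{sign}(Gv)) - d_{\mathbb{S}^{n-1}}(u,v)|$ and (ii) the oscillation $\sup_{x\in S} d_H(\mathrm{sign}(Gx),\mathrm{sign}(G\bar{x}))$, each by $\delta/3$; the residual geodesic-distance difference $|d_{\mathbb{S}^{n-1}}(x,y)-d_{\mathbb{S}^{n-1}}(\bar{x},\bar{y})|$ is $O(\varepsilon)$ and can be absorbed.

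For the net step, the indicators $\mathbf{1}_{\mathrm{sign}\langle u,g_i\rangle\neq\mathrm{sign}\langle v,g_i\rangle}$ are i.i.d.\ Bernoullis with mean $d_{\mathbb{S}^{n-1}}(u,v)$, so Hoeffding's inequality combined with a union bound over the $N(S;\varepsilon)^2$ pairs controls (i) by $\delta/3$ with failure probability $\exp(-c\delta^2 m)$ as soon as $m\geq C\delta^{-2}\log N(S;\varepsilon)$; this yields the first term. The same Bernstein-type estimate, applied to $\mathbf{1}_{|\langle u,g_i\rangle|\leq t}$, also produces the auxiliary bound $\tfrac{1}{m}\#\{i:|\langle u,g_i\rangle|\leq t\}\lesssim t+\delta$ uniformly over $u\in\mathcal{N}_\varepsilon$, which feeds into the oscillation argument.

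For the oscillation, set $z:=x-\bar{x}\in T$ with $T:=(S-S)\cap B_2^n(0;\varepsilon)$. A sign flip at index $i$ forces $|\langle\bar{x},g_i\rangle|\leq|\langle z,g_i\rangle|$, so for any threshold $t>0$,
\[
m\cdot d_H(\mathrm{sign}(Gx),\mathrm{sign}(G\bar{x})) \;\leq\; \#\{i:|\langle\bar{x},g_i\rangle|\leq t\} + \#\{i:|\langle z,g_i\rangle|>t\}.
\]
The first summand is $\lesssim m(t+\delta)$ by the auxiliary bound, and the choice $t\asymp\delta$ handles it. The crucial device for the second summand is the Chebyshev-type bound $\#\{i:|\langle z,g_i\rangle|>t\}\leq t^{-2}\|Gz\|^2$: this \emph{second-moment} step rather than a first-moment Markov bound is precisely what yields the $\delta^{-3}$ rate, since Markov would only deliver $\delta^{-4}$. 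A Gordon--Chevet Gaussian comparison together with the Lipschitz concentration of $G\mapsto\sup_{z\in T}\|Gz\|$ (which has Lipschitz constant $\varepsilon$ in the Frobenius norm) yields
\[
\sup_{z\in T}\|Gz\|^2 \;\lesssim\; m\varepsilon^2 + w_*(T)^2 + (\text{sub-Gaussian deviation}),
\]
so dividing by $mt^2\asymp m\delta^2$ makes the contribution $O(\delta)$ exactly when $m\geq C\delta^{-3}w_*(T)^2$, matching the second term.

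The principal obstacle is promoting the pointwise Chebyshev bound on sign flips to a \emph{uniform} supremum over all displacements $z\in T$. A naive union bound is catastrophic and one must instead combine a Gordon-type Gaussian comparison for $G\mapsto\|Gz\|$ with Gaussian concentration of this functional in $G$. A related technicality, responsible for the logarithmic loss in the scale $\varepsilon=c_1\delta/\log(\delta^{-1})$, is that the discontinuous indicator $\mathbf{1}_{|\cdot|\leq t}$ is not amenable to Gaussian concentration; replacing it by a $\mathrm{Lip}(1/t)$ smoothed surrogate forces a $\log(\delta^{-1})$ factor when one balances the smoothing scale against the threshold $t\asymp\delta$.
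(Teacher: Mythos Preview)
The paper does not prove Theorem~\ref{thm:Oymak-Recht}; it is quoted as \cite[Theorem~2.5]{oymak2015near} and used as a black box in the proof of Theorem~\ref{thm:main}. There is therefore no ``paper's own proof'' to compare your sketch against.

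That said, your outline is a fair high-level account of the Oymak--Recht argument: the decomposition into a net term and an oscillation term, the Hoeffding-plus-union-bound for the net contributing $\delta^{-2}\log N(S;\varepsilon)$, and the second-moment (Chebyshev) control of sign flips via $\sup_{z\in T}\|Gz\|^2$ together with Gaussian concentration for the operator norm on $T=(S-S)\cap B_2^n(0;\varepsilon)$, which produces the $\delta^{-3}w_*(T)^2$ term. Two small points you should be careful with if you flesh this out. First, the comparison ``Markov would only deliver $\delta^{-4}$'' is not quite the right benchmark: the Plan--Vershynin argument you invoke in the first line yields $\delta^{-6}$, and the improvement to $\delta^{-3}$ comes from more than just swapping a first moment for a second moment---the localization to $B_2^n(0;\varepsilon)$ in the complexity term is equally essential. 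Second, your explanation of the $\log(\delta^{-1})$ loss in $\varepsilon$ is heuristic; in the actual argument it arises from a geometric dyadic decomposition of the threshold levels rather than from smoothing an indicator, and getting the exponent on $\delta$ right requires tracking this carefully.
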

\sjoerdnew{Motivated by their result in Theorem~\ref{thm:PVmain}, Plan and Vershynin conjectured that $\delta^{-2}w_*(S)^2$ (not necessarily Gaussian) hyperplanes are necessary and sufficient to produce a $\delta$-uniform tessellation of $S$ \cite[Section 1.7]{plan2014dimension}. The follow-up work \cite{oymak2015near} more specifically conjectured that $\delta^{-2}w_*(S)^2$ Gaussian hyperplanes are necessary and sufficient and showed that this number of Gaussian hyperplanes is sufficient to induce a $\delta$-uniform tessellation for special structured sets (e.g., when $S$ is the intersection of $\mathbb{S}^{n-1}$ with a subspace).  
The purpose of \sjoerdnew{our} paper is to provide a negative answer to the latter conjecture} and to show that Theorem~\ref{thm:Oymak-Recht} cannot be improved further in general. \color{black} More precisely, we prove the following result. 
\begin{theorem}
\label{thm:main}
There exist constants $c>0$ and $c_1>c_2>0$ such that the following holds. Let $G\in \R^{m\times n}$ be standard Gaussian. For any $0<\delta<c$ there exists an $n=n(\delta) \in \mathbb{N}$ and $S=S(\delta)\subset \mathbb{S}^{n-1}$ such that if $m\geq c_1 \delta^{-3} w_*(S)^2$ then $G$ induces a $\delta$-uniform tessellation of $S$ with probability at least $0.99$ and if $m\leq c_2 \delta^{-3} w_*(S)^2$ then $G$ fails to induce a $\delta$-uniform tessellation of $S$ with probability at least $0.99$.
\end{theorem}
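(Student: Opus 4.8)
The plan is to exhibit a single set $S$ that simultaneously witnesses the sharpness of the $\delta^{-3}$ exponent from both sides. The natural candidate is a carefully scaled union of a "large-scale" piece that forces $w_*(S)$ to be of a controlled size together with a "fine-scale" cluster of points packed at geodesic distance of order $\delta$ from one another. The intuition behind why $\delta^{-3}$ (rather than $\delta^{-2}$) is unavoidable is the following: to resolve the geodesic distances between points that are only $O(\delta)$ apart, the Hamming distance must track an arccos-type quantity in a regime where its derivative is large, i.e.\ near antipodal directions of the \emph{difference} vectors; the relevant fluctuation of $d_H$ around its mean is of order $(\delta m)^{-1/2}$ scaled against a resolution demand of $\delta$, but the number of pairs that must be simultaneously controlled contributes a $\log N(S;\varepsilon)$ term with $\varepsilon \sim \delta/\log(\delta^{-1})$, and the local Gaussian complexity $w_*((S-S)\cap B_2^n(0;\varepsilon))$ enters \emph{quadratically} with a $\delta^{-3}$ prefactor exactly as in Theorem~\ref{thm:Oymak-Recht}. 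So the construction should be tuned so that the $C\delta^{-3}w_*((S-S)\cap B_2^n(0;\varepsilon))^2$ term dominates and equals $c_1\delta^{-3}w_*(S)^2$ up to constants, and then the matching lower bound shows this term is genuinely needed.

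First I would fix the construction: take $n$ growing with $\delta$, and build $S$ as (an embedding into $\mathbb{S}^{n-1}$ of) a set of the form $\{x_0\} \cup \{x_0 + \delta v : v \in V\}$ suitably normalized to lie on the sphere, where $V$ is a net of vectors in a subspace of dimension $k = k(\delta)$ with $\|v\|_2 \asymp 1$ and pairwise distances $\asymp 1$. Then $w_*(S) \asymp w_*(\delta V) \asymp \delta\sqrt{k}$ in the relevant range, while $(S-S)\cap B_2^n(0;\varepsilon)$ for $\varepsilon\sim\delta/\log(\delta^{-1})$ captures essentially all the structure of $\delta(V-V)$, so its Gaussian complexity is again of order $\delta\sqrt{k}$ and the covering term $\log N(S;\varepsilon) \asymp k$ is of the same order as $\delta^{-3}(\delta\sqrt k)^2 = \delta^{-1}k$ only if... — here one must choose $k$ so the local-complexity term is the binding one; concretely $k \asymp \delta^{-1}$ makes $\log N \asymp \delta^{-1}$ and $\delta^{-3}w_*(S)^2 \asymp \delta^{-3}\cdot\delta^2 k = \delta^{-1}k \asymp \delta^{-2}$, both of the same order, which is fine — the upper bound direction then follows immediately from Theorem~\ref{thm:Oymak-Recht} with the stated probability $0.99$ (adjusting $c_1$ so that $\exp(-c_2\delta^2 m) \le 0.005$, using $m \gtrsim \delta^{-2}$).

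The substantive part is the lower bound: showing that if $m \le c_2\delta^{-3}w_*(S)^2 \asymp c_2\delta^{-2}$ then with probability $0.99$ some pair $x,y\in S$ violates the uniformity inequality. I would focus on pairs $x = x_0 + \delta v$, $y = x_0 + \delta v'$ with $v,v'\in V$ at distance $\asymp 1$; their geodesic distance is $\asymp \delta$. The Hamming distance $d_H(\mathrm{sign}(Gx),\mathrm{sign}(Gy))$ is a sum of $m$ independent Bernoulli-type indicators, each with success probability $\frac{1}{\pi}\arccos\langle x,y\rangle \asymp \delta$; so $d_H$ concentrates around $\asymp\delta$ with fluctuations of standard deviation $\asymp \sqrt{\delta/m} \asymp \sqrt{\delta\cdot\delta^2} = \delta^{3/2}$, which is $\gg \delta$ is false — it is $\ll \delta$, so a single pair concentrates \emph{too well}. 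The key point is therefore a \emph{union over exponentially many pairs}: $|V| \ge \exp(ck)$ by a packing argument, and one must find among the $\binom{|V|}{2}$ pairs one whose empirical Hamming distance deviates from its mean by more than $\delta$. This requires an \emph{anti-concentration / small-ball} lower bound on $\max_{v\ne v'} |d_H - \mathbb{E}d_H|$, which I would obtain via a second-moment or Paley–Zygmund argument on a well-chosen sub-collection of nearly independent pairs, combined with a lower tail bound for the binomial (a pair fails when its empirical mean exceeds $2\delta$, an event of probability $\ge \exp(-C\delta m) \ge \exp(-Cc_2\delta^{-1})$; choosing $k \asymp \delta^{-1}$ with a large enough constant beats this).

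The main obstacle I anticipate is exactly this last step: making the union bound over pairs \emph{succeed} as a lower bound rather than merely fail to rule the event out. The pairs are highly dependent (they share coordinates $\mathrm{sign}(\langle x, g_i\rangle)$), so one cannot treat the $\binom{|V|}{2}$ deviation events as independent. The remedy is to extract a subfamily of $\Omega(|V|)$ \emph{disjoint} pairs $(v_j, v_j')$ — a perfect matching on the net — for which the indicator vectors $(\mathrm{sign}(\langle x_j, g_i\rangle) \ne \mathrm{sign}(\langle y_j, g_i\rangle))_i$ across different $j$ are still correlated through the shared $g_i$, but one can condition on $g_i$ and use negative-association or a Gaussian comparison to reduce to an independent model, or alternatively use that for a \emph{fixed} realization of $G$ the failure of uniformity is monotone and appeal to a direct first-moment computation showing $\mathbb{E}[\#\text{failing pairs}] \gg 1$ together with a variance bound. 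Getting the variance of the number of failing pairs to be $o(\mathbb{E}[\cdot]^2)$ is where the real work lies, and it is what pins down the precise relationship $k \asymp \delta^{-1}$ and the separation $c_1 > c_2$.
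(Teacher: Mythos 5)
Your high-level framing (exhibit a single set $S$; get the upper bound from Theorem~\ref{thm:Oymak-Recht}; show a matching lower bound) is right, but your strategy for the lower bound is genuinely different from the paper's and, as you yourself flag, leaves the hard step undone. The paper does \emph{not} argue by anti-concentration of Hamming distances over an exponentially large family of pairs. Instead it lifts a Euclidean set $K(\delta) = [-3\delta,3\delta]\oplus B_2^{n(\delta)}(0;\varepsilon(\delta))$ to the sphere via $S(\delta)=Q_\lambda(K(\delta))$ with $\lambda\sim\delta^{-1}$ (Lemma~\ref{lm:distancelift}, Lemma~\ref{lem:lifting1D}, Proposition~\ref{prop:inducedtessellation}), reducing the spherical lower bound to a Euclidean one for affine hyperplanes with Gaussian shifts (Theorem~\ref{thm:lowerbound}). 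The Euclidean lower bound is then \emph{constructive} rather than fluctuation-based: order the shifts, keep the $k$ indices $I$ with $|\lambda_i|\lesssim\lambda k/m$ (Proposition~\ref{prop:shiftbound}), and invoke the Dvoretzky--Milman theorem to write $(-\lambda_i-\varepsilon\sgn(\lambda_i))_{i\in I}=(\langle x^*,g_i\rangle)_{i\in I}$ for some $x^*\in K\cap B_2^n(0;\delta)$. This forces all $k$ hyperplanes in $I$ to separate $x^*$ from $0$, so the (rescaled) Hamming distance $\pi\lambda k/m$ overshoots $\|x^*\|\leq\delta$ once $m\lesssim\lambda\delta^{-3}w_*(K\cap B)^2$. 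There is no union bound over pairs and no second-moment computation; the dependence across pairs that you correctly identify as the main obstacle simply never arises.

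Two concrete problems with your route. First, the Paley--Zygmund / variance-control step is not a technicality you have deferred — it is the entire content of the lower bound under your scheme, and the dependence structure across pairs sharing the same $(g_i)$ is exactly why the literature (including \cite{dirksen2022sharp}, which the paper adapts) does \emph{not} take this path. You would essentially have to reprove a version of the Dvoretzky--Milman mechanism in disguise, and there is no indication in your outline of how the variance of the number of failing pairs would be bounded. Second, the complexity arithmetic in your construction is off: for any nonempty $S\subset\mathbb{S}^{n-1}$ one has $w_*(S)\geq\E|\langle x_0,g\rangle|=\sqrt{2/\pi}$ for any fixed $x_0\in S$, so $w_*(S)\gtrsim 1$, not $\asymp\delta\sqrt{k}$. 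Consequently $\delta^{-3}w_*(S)^2\gtrsim\delta^{-3}$, not $\asymp\delta^{-2}$ as you compute, and your tuning of $k\asymp\delta^{-1}$ to balance the covering and local-complexity terms in Theorem~\ref{thm:Oymak-Recht} needs to be redone with this corrected normalization. The paper's construction sidesteps this because the lift $Q_\lambda$ shrinks the Gaussian width by a factor $\lambda^{-1}\sim\delta$ (Lemma~\ref{lem:lifting1D}), which is precisely where the extra power of $\delta$ is \emph{gained}, not where it is lost.
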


\subsection{Related work}

Uniform hyperplane tessellations of subsets of the sphere have proven useful for several purposes. First, they can be directly used for data dimension reduction: if $A$ induces a $\delta$-uniform tessellation on $S$, then the map $f(x)=\mathrm{sign}(Ax)$ is a $\delta$-binary embedding, meaning that it maps $S$ into the Hamming cube in a near-isometric manner (i.e., up to an additive error). In this setting, the number of hyperplanes $m$ governs the dimension reduction that can be achieved. For dimension reduction it is of interest to prove uniform hyperplane tessellation results for random matrices $A$ that support fast matrix-vector \color{black}multiplication, so that $f(x)$ can be computed in an efficient manner, see \cite{dirksen2018fast,oymak2017near,yi2015binary,yu2014circulant} and the references therein for results in this direction. Second, uniform hyperplane tessellations play a role in the theory of one-bit compressed sensing (see, e.g., \cite{boufounos20081,jacques2013robust,plan2013one}), the problem of reconstructing a signal $x$ from its one-bit measurements $\mathrm{sign}(Ax)$, which arise by quantizing analog measurements (represented by $Ax$) using a simple one-bit analog-to-digital quantizer. This connection was first established in \cite{jacques2013robust}. Uniform hyperplane tessellations are useful in the analysis of reconstruction algorithms for one-bit compressed sensing, in particular in proving uniform recovery results with robustness to corruptions occurring during quantization \cite{plan2012robust}. Third, uniform hyperplane tessellation results for Gaussian hyperplanes were recently used to derive bounds on the Lipschitz constants of deep ReLU neural networks with Gaussian weights \cite{dirksen2025near}.\par
Uniform hyperplane tessellation results for subsets of the sphere using standard Gaussian hyperplanes were first studied in the special case of the set of unit norm $s$-sparse vectors in \cite{jacques2013robust} and later for general $K$ in \cite{plan2014dimension}. The guarantees in \cite{plan2014dimension} were improved in \cite{oymak2015near}, but this still left a gap to the conjectured optimal result stated in \cite{oymak2015near,plan2014dimension}. It was already known that $m\geq C \delta^{-2} p(K)^2$ is sufficient for other complexity parameters $p(K)$ (that are possibly different from the Gaussian complexity) for subsets of the sphere that admit additional structure (see, e.g., \cite[Theorem 2.6]{oymak2015near}). For instance, for finite sets it is easy to show that $m\geq C\delta^{-2}\log(|K|)$, where $|K|$ is the cardinality of $K$, suffices. The latter condition was also established to be necessary using communication complexity bounds \cite{yi2015binary}.\par 
Using homogenous hyperplanes, which pass through the origin, one cannot separate points lying on a ray emanating from the origin. In particular, one can only hope to produce uniform tessellations of subsets of the sphere. As was already observed in \cite{plan2014dimension}, one can produce uniform tessellations of general Euclidean sets by adding random shifts to the hyperplanes. 
Closely connected to our work, \cite{dirksen2022sharp} studied uniform tessellations with hyperplanes with i.i.d.\ standard Gaussian directions and i.i.d.\ shifts that are uniformly distributed on $[-\lambda,\lambda]$ (with $\lambda$ sufficiently large) and derived a version of Theorem~\ref{thm:Oymak-Recht} for general bounded Euclidean sets. Moreover, they showed that their result was \emph{optimal}. As part of the optimality proof, they constructed a set $K\subset \R^n$ such that $m\sim \delta^{-3}\sqrt{\log(1/\delta)}w_*(K)^2$ hyperplanes are necessary and sufficient to induce a Euclidean $\delta$-uniform hyperplane tesselation \cite[Theorem 4.6]{dirksen2022sharp} if $\lambda\sim\sqrt{\log(1/\delta)}$. This result, however, did not provide an answer to \sjoerdnew{the conjectures of \cite{oymak2015near,plan2014dimension}}, as it considered hyperplanes with (sufficiently large) uniformly distributed shifts and the constructed $K$ is a convex set that, in particular, is not a subset of the sphere. In this work we revisit a `lifting argument' of  \cite{plan2014dimension} to connect spherical uniform hyperplane tessellations to Euclidean uniform hyperplane tessellations and adapt the arguments of \cite{dirksen2022sharp} to derive a necessary condition in the spherical setting. In particular, the set in Theorem~\ref{thm:main} is constructed using a suitable lifting. The sufficient condition in Theorem~\ref{thm:main} is derived directly from Theorem~\ref{thm:Oymak-Recht}, showing that the latter result cannot be improved further in general.       

\section{Lifting argument}

In this section we revisit and slightly sharpen a lifting argument introduced in \cite[Section 6]{plan2014dimension}. The goal is to derive a uniform tessellation result for a general set $K \subset \mathbb{R}^n$ with respect to the Euclidean distance by applying a spherical tessellation result to a \emph{lifting} of $K$ to the unit sphere in $\R^{n+1}$. 
Let us consider a general set $K \subset \mathbb{R}^n$ and denote $\mathrm{rad}(K) := \sup_{x \in K} \|x\|_2$. We can \emph{lift} this set by considering, for a given $\lambda>0$, the map 
$$Q_{\lambda} : \R^n \to \mathbb{S}^n, \ Q_{\lambda}(x) = \frac{x \oplus \lambda}{\|x \oplus \lambda\|_2}.$$
Consider $g_1,\ldots,g_m\in \R^n$ and $\tau_1,\ldots,\tau_m$, set $g_i' := (g_i, \tau_i)\in \R^{n+1}$, and let $G'=[G|\tau]\in \R^{m\times (n+1)}$ be the matrix with rows $g_i'$. Observe that for any $x\in K$,
\begin{align*}
\mathrm{sign}(G'Q_{\lambda}(x)) & = \left(\mathrm{sign}\left(\left\langle \frac{x \oplus \lambda}{\|x \oplus \lambda\|_2} , g_i' \right\rangle\right)\right)_{i = 1}^m \\
    &= \left(\mathrm{sign}(\langle(x \oplus \lambda), g_i'\rangle\right)_{i = 1}^m \\
    &= \left(\mathrm{sign}(\langle x, g_i\rangle  + \lambda \tau_i)\right)_{i = 1}^m.
\end{align*}
Hence, 
\begin{align*}
\hat{d}_{G',\lambda}(x,y) & := d_H(\mathrm{sign}(G'Q_{\lambda}(x)), \mathrm{sign}(G'Q_{\lambda}(y))) \\
& = \frac{1}{m} \sum_{i = 1}^m 1\{\mathrm{sign}\left(\langle x, g_i\rangle  + \lambda \tau_i \right) \neq \mathrm{sign}\left(\langle y, g_i\rangle  + \lambda \tau_i \right)\},
\end{align*}
is the fraction of \emph{affine} hyperplanes $H(g_i, \lambda \tau_i) := \{x : \langle x,g_i \rangle + \lambda \tau_i = 0\}$ that separate $x$ and $y$. Proposition~\ref{prop:inducedtessellation} will roughly show that if $G'$ induces a spherical $\delta^2$-tessellation of $Q_{\lambda}(K)$, then the affine hyperplanes will produce a $\delta$-uniform tessellation of $K$ with respect to the Euclidean metric (up to scaling). The proof relies on the following lemma, which is a slight modification of an argument in \cite{plan2014dimension}. The tighter bound stated here will be needed later to estimate the covering number of liftings of sets with small radius.   
\begin{lemma}
\label{lm:distancelift}
\sjoerd{Let $K\subset \R^n$ and suppose that $\mathrm{rad}(K)\leq \lambda$. Then, for any $x,y \in K$,} 
    $$\left|\|Q_{\lambda}(x) - Q_{\lambda}(y)\|_2 - \frac{1}{\lambda}\|x - y\|_2\right| \leq 4\mathrm{rad}(K)^{\sjoerd{2}}\lambda^{-2}.$$
\end{lemma}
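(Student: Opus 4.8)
The plan is to reduce everything to a single exact formula for the chordal distance between the two lifted points, and then convert an error estimate on \emph{squared} distances into one on the distances themselves. Write $r := \mathrm{rad}(K)$ and, for $x,y\in K$, set $a := \|x\oplus\lambda\|_2 = \sqrt{\|x\|_2^2+\lambda^2}$ and $b := \|y\oplus\lambda\|_2$. Since $\|x\|_2,\|y\|_2\le r\le\lambda$, we have $\lambda\le a,b\le\sqrt{r^2+\lambda^2}$, hence $\lambda^2\le ab\le r^2+\lambda^2$ and $a+b\ge 2\lambda$; these are the only facts about $a,b$ that will be used.

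First I would expand $\|Q_\lambda(x)-Q_\lambda(y)\|_2^2 = 2 - 2\langle Q_\lambda(x),Q_\lambda(y)\rangle$, valid because $Q_\lambda(x)$ and $Q_\lambda(y)$ are unit vectors, and then use the elementary identity $\langle x,y\rangle+\lambda^2 = \langle x\oplus\lambda,\,y\oplus\lambda\rangle = \tfrac12\bigl(a^2+b^2-\|x-y\|_2^2\bigr)$ (obtained by expanding $\|x-y\|_2^2$). This yields the exact identity
\[
\|Q_\lambda(x)-Q_\lambda(y)\|_2^2 \;=\; \frac{\|x-y\|_2^2-(a-b)^2}{ab}.
\]
Writing $D := \|Q_\lambda(x)-Q_\lambda(y)\|_2$ and $d := \|x-y\|_2/\lambda$, I would next bound $|D^2-d^2|$. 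Since $D^2-d^2 = \|x-y\|_2^2\bigl(\tfrac{1}{ab}-\tfrac{1}{\lambda^2}\bigr) - \tfrac{(a-b)^2}{ab}$, the first term is, in absolute value, at most $\|x-y\|_2^2\cdot\tfrac{ab-\lambda^2}{ab\lambda^2}\le \tfrac{r^2}{\lambda^4}\|x-y\|_2^2 = \tfrac{r^2}{\lambda^2}d^2$, using $ab-\lambda^2\le r^2$ and $ab\ge\lambda^2$. For the second term the key observation is to control $(a-b)^2$ by $\|x-y\|_2^2$ (not merely by $r^2$): from $a^2-b^2 = \|x\|_2^2-\|y\|_2^2$ and $\bigl|\|x\|_2^2-\|y\|_2^2\bigr| = \bigl|\|x\|_2-\|y\|_2\bigr|\,(\|x\|_2+\|y\|_2)\le 2r\|x-y\|_2$, together with $a+b\ge 2\lambda$, we get $(a-b)^2 = (a^2-b^2)^2/(a+b)^2\le r^2\|x-y\|_2^2/\lambda^2$, so the second term is also at most $\tfrac{r^2}{\lambda^4}\|x-y\|_2^2 = \tfrac{r^2}{\lambda^2}d^2$. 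Hence $|D^2-d^2|\le \tfrac{2r^2}{\lambda^2}d^2$.

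Finally, if $x=y$ both sides of the claim vanish; otherwise $d>0$, so $|D-d| = |D^2-d^2|/(D+d)\le |D^2-d^2|/d\le \tfrac{2r^2}{\lambda^2}d = \tfrac{2r^2}{\lambda^3}\|x-y\|_2\le \tfrac{4r^3}{\lambda^3}\le \tfrac{4r^2}{\lambda^2}$, where the last two inequalities use $\|x-y\|_2\le 2r$ and then $r\le\lambda$. (This argument in fact gives the sharper bound $4\,\mathrm{rad}(K)^3\lambda^{-3}$.) The one step that requires genuine care is bounding $(a-b)^2$ by a constant times $\|x-y\|_2^2$ rather than by $r^4/\lambda^2$: a cruder bound would make the division by $d$ blow up as $x\to y$, and it is precisely this refinement that produces the quadratic-in-$\mathrm{rad}(K)$ decay. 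Everything else is routine bookkeeping.
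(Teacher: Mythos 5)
Your proof is correct, and it follows a genuinely different route from the paper's. The paper writes $\|x-y\|_2 = \|x\oplus 0 - y\oplus 0\|_2$, splits $\|Q_\lambda(x)-Q_\lambda(y)\|_2 - \tfrac{1}{\lambda}\|x-y\|_2$ into three pieces via the (reverse) triangle inequality, and then controls each piece with the elementary estimate $\left|\tfrac{1}{\sqrt{r^2+\lambda^2}}-\tfrac{1}{\lambda}\right|\le r^2\lambda^{-3}$ (their Lemma~\ref{lem:lambdaEst}); the third piece costs $2r^2\lambda^{-2}$, which is what caps the final bound at $4r^2\lambda^{-2}$. You instead derive the exact chord-length identity
\[
\|Q_\lambda(x)-Q_\lambda(y)\|_2^2=\frac{\|x-y\|_2^2-(a-b)^2}{ab},\qquad a=\|x\oplus\lambda\|_2,\ b=\|y\oplus\lambda\|_2,
\]
compare squared distances, and divide by $D+d$ at the end. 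Both the algebraic identity and your two bounds (on $\tfrac{1}{ab}-\tfrac{1}{\lambda^2}$ and on $(a-b)^2$) check out, and your observation that $(a-b)^2$ must be controlled by $\|x-y\|_2^2$ rather than by a constant is exactly the step that keeps the division by $d$ from blowing up; without it the argument would fail as $x\to y$. As a bonus, your method yields the sharper estimate $4\,\mathrm{rad}(K)^3\lambda^{-3}$, which is strictly stronger than the stated $4\,\mathrm{rad}(K)^2\lambda^{-2}$ under the hypothesis $\mathrm{rad}(K)\le\lambda$. In short: same conclusion, different (and somewhat cleaner) mechanism, and a slightly better constant in the radius exponent.
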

\begin{proof}
    Put $r:= \mathrm{rad}(K)$. By the (reverse) triangle inequality
    \begin{align*}
        & \left|\|Q_{\lambda}(x) - Q_{\lambda}(y)\|_2 - \frac{1}{\lambda}\|x - y\|_2\right| \\
        &\qquad = \left|\|Q_{\lambda}(x) - Q_{\lambda}(y)\|_2 - \frac{1}{\lambda}\|x \oplus 0 - y\oplus 0\|_2\right| \\
        &\qquad \leq \left\|\frac{x}{\|x \oplus \lambda\|_2} - \frac{x}{\lambda}\right\|_2 + \left\|\frac{y}{\|y \oplus \lambda\|_2} - \frac{y}{\lambda}\right\|_2 + \left|\frac{\lambda}{\|x \oplus \lambda\|_2} - \frac{\lambda}{\|y \oplus \lambda\|_2}\right|\\
        &\qquad \leq \|x\|_2 \left| \frac{1}{\|x \oplus \lambda\|} - \frac{1}{\lambda}\right| + \|y\|_2 \left| \frac{1}{\|y \oplus \lambda\|} - \frac{1}{\lambda}\right| + \lambda \left|\frac{1}{\|x \oplus \lambda\|_2} - \frac{1}{\|y \oplus \lambda\|_2}\right|.
    \end{align*}
\sjoerd{Lemma~\ref{lem:lambdaEst} implies that for any $z\in K$}
    $$\left|\frac{1}{\|z \oplus \lambda\|_2} -\frac{1}{\lambda}\right| \leq r^2\lambda^{-3}.$$ 
    \sjoerd{Combining this with the triangle inequality, we find}
    $$\left|\|Q_{\lambda}(x) - Q_{\lambda}(y)\|_2 - \frac{1}{\lambda}\|x - y\|_2\right| \leq r^{\sjoerd{3}}\lambda^{-3} + r^{\sjoerd{3}}\lambda^{-3} + 2r^2\lambda^{-2} \leq 4r^{\sjoerd{2}} \lambda^{-2},$$
    as $\lambda \geq r$. This concludes the proof. 
\end{proof}
The following result was obtained in \cite[Equation (6.7)]{plan2014dimension}. We provide a proof for the sake of completeness. 
\begin{proposition}\label{prop:inducedtessellation}
There exist absolute constants $C,c > 0$ such that the following holds. Let $K$ be such that $\mathrm{rad}(K) \leq 1$ and $0 < \delta < c$. If $G'=[G|\tau]\in \R^{m\times(n+1)}$ induces a spherical $\frac{\delta^2}{4\pi C}$-uniform tessellation of $Q_{\lambda}(K)$ with $\lambda = 2C\delta^{-1}$, then the pair $(G,\lambda\tau)$ induces a Euclidean $\delta$-uniform tessellation of $K$ in the sense that
$$\left|\pi \lambda \hat{d}_{G',\lambda}(x,y) - \|x - y\|_2\right| \leq \delta \text{ for all } x,y \in K.$$  
\end{proposition}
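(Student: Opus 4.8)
The plan is to combine Lemma~\ref{lm:distancelift} with the hypothesis that $G'$ induces a spherical $\frac{\delta^2}{4\pi C}$-uniform tessellation of $Q_\lambda(K)$, and then pass from the (normalized) geodesic distance on the sphere to the chordal (Euclidean) distance, which in turn relates back to $\frac{1}{\lambda}\|x-y\|_2$. First I would fix the constant $C$ to be large enough for all the estimates below to close, and set $\lambda = 2C\delta^{-1}$ so that $\mathrm{rad}(K)\leq 1 \leq \lambda$, allowing Lemma~\ref{lm:distancelift} to apply. Since all points $Q_\lambda(x), Q_\lambda(y)$ lie on $\mathbb{S}^n$, the normalized geodesic distance and the Euclidean (chordal) distance are related by $\|u-w\|_2 = 2\sin(\tfrac{\pi}{2}d_{\mathbb{S}^n}(u,w))$, so $\frac{2}{\pi}d_{\mathbb{S}^n}(u,w) \leq \|u-w\|_2 \leq \pi\, d_{\mathbb{S}^n}(u,w)$ on the relevant range (in fact $\|u-w\|_2 \le \frac{\pi}{2} d_{\mathbb{S}^n}(u,w)$ after renormalizing the constant $\frac1\pi$ in the definition of $d_{\mathbb{S}^n}$, but a two-sided comparison with absolute constants is all I need). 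The key chain of inequalities is then: the spherical $\frac{\delta^2}{4\pi C}$-tessellation property gives $|d_H(\mathrm{sign}(G'Q_\lambda(x)),\mathrm{sign}(G'Q_\lambda(y))) - d_{\mathbb{S}^n}(Q_\lambda(x),Q_\lambda(y))|\le \frac{\delta^2}{4\pi C}$; the Hamming distance here is exactly $\hat d_{G',\lambda}(x,y)$ by the computation preceding the statement; and $d_{\mathbb{S}^n}(Q_\lambda(x),Q_\lambda(y))$ is comparable to $\|Q_\lambda(x)-Q_\lambda(y)\|_2$, which by Lemma~\ref{lm:distancelift} is within $4\mathrm{rad}(K)^2\lambda^{-2}\le 4\lambda^{-2}$ of $\frac{1}{\lambda}\|x-y\|_2$.

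Carrying this out, I would multiply everything by $\pi\lambda$ to rescale: $|\pi\lambda\,\hat d_{G',\lambda}(x,y) - \pi\lambda\, d_{\mathbb{S}^n}(Q_\lambda(x),Q_\lambda(y))| \le \pi\lambda\cdot\frac{\delta^2}{4\pi C} = \frac{\lambda\delta^2}{4C} = \frac{\delta}{2}$ by the choice of $\lambda$. Next, I need $\pi\lambda\, d_{\mathbb{S}^n}(Q_\lambda(x),Q_\lambda(y))$ to be close to $\|x-y\|_2$. Writing $d_{\mathbb{S}^n} = \frac{1}{\pi}\arccos\langle\cdot,\cdot\rangle$ and using $\chi := \|Q_\lambda(x)-Q_\lambda(y)\|_2$, one has $\arccos\langle Q_\lambda(x),Q_\lambda(y)\rangle = 2\arcsin(\chi/2)$, so $\pi\lambda\, d_{\mathbb{S}^n} = 2\lambda\arcsin(\chi/2)$. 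Since $\chi \le \frac{1}{\lambda}\|x-y\|_2 + 4\lambda^{-2} \le \frac{2}{\lambda}\mathrm{rad}(K) + 4\lambda^{-2}$ is small (of order $\lambda^{-1}$), the Taylor estimate $0\le \arcsin(t) - t \le t^3$ for small $t\ge 0$ gives $|2\lambda\arcsin(\chi/2) - \lambda\chi| \le 2\lambda(\chi/2)^3 = \frac{\lambda\chi^3}{4} = O(\lambda^{-2})$. Combining with Lemma~\ref{lm:distancelift}, $|\lambda\chi - \|x-y\|_2| \le 4\lambda^{-1}\mathrm{rad}(K)^2 \le 4\lambda^{-1} = \frac{2\delta}{C}$. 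Adding the three error contributions — $\frac{\delta}{2}$ from the tessellation, $O(\lambda^{-2}) = O(\delta^2)$ from the $\arcsin$ linearization, and $\frac{2\delta}{C} = O(\delta)$ from the lifting distortion — and choosing $C$ large and $c$ small enough that each is at most $\delta/3$ (the $O(\delta^2)$ term is absorbed since $\delta<c$), yields $|\pi\lambda\,\hat d_{G',\lambda}(x,y) - \|x-y\|_2| \le \delta$ for all $x,y\in K$.

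The main obstacle, such as it is, is bookkeeping: one must track that $\mathrm{rad}(Q_\lambda(K))$-scale quantities like $\chi$ are genuinely $O(\lambda^{-1}) = O(\delta)$ so that the cubic remainder in the $\arcsin$ expansion is lower-order (this is where $\mathrm{rad}(K)\le 1$ and the explicit $\lambda = 2C\delta^{-1}$ are used), and that the constant $C$ can be chosen once and for all to simultaneously (i) make $\lambda\delta^2/(4C) = \delta/2$ with the scaling from Proposition~\ref{prop:inducedtessellation}'s stated tolerance $\frac{\delta^2}{4\pi C}$, and (ii) make $4\lambda^{-1} = 2\delta/C$ small relative to $\delta$. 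There is a mild circularity to resolve in that $C$ appears both in the tessellation tolerance and in $\lambda$; I would simply pick $C$ large enough for (ii) and then note that (i) gives exactly $\delta/2$ regardless, with the leftover slack covering the $\arcsin$ error for $\delta$ below the absolute threshold $c$. No step requires anything beyond elementary trigonometric inequalities and the two lemmas already established.
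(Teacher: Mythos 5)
Your proof is correct and follows essentially the same structure as the paper's: a triangle-inequality split of $\left|\pi\lambda\hat d_{G',\lambda}(x,y)-\|x-y\|_2\right|$ into the tessellation error, the geodesic-versus-chordal error on the sphere, and the chordal-versus-Euclidean distortion from Lemma~\ref{lm:distancelift}, with $\lambda\sim\delta^{-1}$ chosen so that the three contributions sum to at most $\delta$. The only cosmetic difference is that the paper invokes Lemma~\ref{fct:almostlip} for the geodesic-chordal comparison while you rederive it from $\arccos\langle u,w\rangle=2\arcsin(\|u-w\|_2/2)$ and a cubic Taylor bound (and your ``each at most $\delta/3$'' is a slight misstatement since the tessellation term is exactly $\delta/2$, but you correct the budget in the following paragraph).
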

\begin{proof}
\sjoerd{Let $\lambda\geq 1$}. Using Lemma \ref{fct:almostlip} (with $w = Q_{\lambda}(x), z = Q_{\lambda}(y)$) and Lemma \ref{lm:distancelift} yields
\begin{align}
& |\pi d_{\mathbb{S}^n}(Q_{\lambda}(x),Q_{\lambda}(y)) - \frac{1}{\lambda}\|x -y\|_2| \\
& \qquad \leq \left|\pi d_{\mathbb{S}^n}(Q_{\lambda}(x),Q_{\lambda}(y)) - \|Q_{\lambda}(x) -Q_{\lambda}(y)\|_2\right| + \left| \|Q_{\lambda}(x) - Q_{\lambda}(y)\| - \frac{1}{\lambda}\|x -y \|_2\right| \notag \\
&\qquad \leq C_0\|Q_{\lambda}(x) - Q_{\lambda}(y)\|_2^2 + \left| \|Q_{\lambda}(x) - Q_{\lambda}(y)\| - \frac{1}{\lambda}\|x -y \|_2\right| \notag\\
&\qquad \leq C_0 \left(\frac{1}{\lambda} \|x - y\|_2 + \left|\|Q_{\lambda}(x) - Q_{\lambda}(y)\|_2 - \frac{1}{\lambda}\|x - y\|_2\right|\right)^2 + 4\lambda^{-2} \notag\\
&\qquad \leq C_0\left(\frac{2}{\lambda} + 4\lambda^{-2}\right)^2 + 4\lambda^{-2} \leq C_1 \lambda^{-2}. \label{eq:distancesphereeuc}
\end{align}
\sjoerd{If $G'=[G|\tau]\in \R^{m\times(n+1)}$ induces a spherical $\delta_0$-uniform tessellation of $Q_{\lambda}(K)$, i.e.,}
$$\left|\pi \hat{d}_{G',\lambda}(x,y) - \pi d_{\mathbb{S}^n}(Q_{\lambda}(x), Q_{\lambda}(y))\right| \leq \pi \delta_0,$$
\sjoerd{then \eqref{eq:distancesphereeuc} yields}
\begin{align*}
& \left|\pi \hat{d}_{G',\lambda}(x,y) - \frac{1}{\lambda}\|x - y\|_2\right| \\
& \qquad \leq \left|\pi \hat{d}_{G',\lambda}(x,y) - \pi d_{\mathbb{S}^n}(Q_{\lambda}(x), Q_{\lambda}(y))\right| + \left|\pi d_{\mathbb{S}^n}(Q_{\lambda}(x),Q_{\lambda}(y)) - \frac{1}{\lambda}\|x -y\|_2\right| \leq \pi \delta_0 + C_1 \lambda^{-2}
\end{align*}
and hence
$$\left|\pi \lambda \hat{d}_{G',\lambda}(x,y) - \|x - y\|_2\right| \leq \pi \delta_0 \lambda + C_1 \lambda^{-1}.$$
The result follows by setting $\lambda := 2C_1\delta^{-1}$ and $\delta_0 := \frac{\delta^2}{4\pi C_1}$. 
\end{proof}
\sjoerd{Although Proposition~\ref{prop:inducedtessellation} states that a spherical tessellation of the lifted set yields a Euclidean tessellation of the original set, the distortion deteriorates from $\delta^2$ to $\delta$. At the same time, the lifted set $Q_{\lambda}(K)$ should become more one-dimensional as $\lambda$ increases and hence easier to tessellate using Gaussian hyperplanes. The following lemma formalizes this by showing that its Gaussian width decreases with $\lambda$. Below we use  
$$w(T) := \mathbb{E} \left(\sup_{x \in T} \langle x, g \rangle\right),$$
where $g$ is standard Gaussian, to denote the Gaussian width of $T\subset \R^n$.} 
\begin{lemma}
\label{lem:lifting1D}
Let $c>0$ be a constant and let $\lambda>0$. There exists a constant $C_1,C_2>0$ depending only on $c$ such that the following holds. If $K \subset \R^n$ satisfies \sjoerd{$\mathrm{rad}(K) \leq 1$ and $w_*(K) \geq c\lambda^{-1}$}, then 
    $$w(Q_{\lambda}(K)) \leq C_1\lambda^{-1}w_*(K).$$
\sjoerd{If additionally $w_*(K) \geq c\lambda$, then
$$w_*(Q_{\lambda}(K)) \leq C_2\lambda^{-1}w_*(K).$$
}
\end{lemma}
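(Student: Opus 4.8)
The plan is to expand $\langle Q_\lambda(x),g\rangle$ by hand, peel off a single coordinate of the Gaussian, and control the remainder using $\mathrm{rad}(K)\le 1$; the lower bounds on $w_*(K)$ are only needed at the very end, to absorb the leftover additive error into a multiple of $\lambda^{-1}w_*(K)$. No deep inputs are required — it is essentially a short computation.

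First I would fix notation: write a standard Gaussian vector in $\R^{n+1}$ as $g=g'\oplus g_{n+1}$, so that $g'$ is a standard Gaussian in $\R^n$ (hence $\E\sup_{x\in K}|\langle x,g'\rangle|=w_*(K)$) and $g_{n+1}$ is an independent standard normal. For $x\in K$, set $v(x):=\lambda/\|x\oplus\lambda\|_2=\lambda/\sqrt{\|x\|_2^2+\lambda^2}\in(0,1]$, so that
$$\langle Q_\lambda(x),g\rangle=\frac{\langle x,g'\rangle+\lambda g_{n+1}}{\|x\oplus\lambda\|_2}=\frac{\langle x,g'\rangle}{\|x\oplus\lambda\|_2}+v(x)\,g_{n+1}.$$
I would then record two elementary facts: (i) $\|x\oplus\lambda\|_2\ge\lambda$, so $|\langle x,g'\rangle|/\|x\oplus\lambda\|_2\le|\langle x,g'\rangle|/\lambda$; and (ii) since $\sqrt{a^2+\lambda^2}-\lambda=a^2/(\sqrt{a^2+\lambda^2}+\lambda)\le a^2/(2\lambda)$ and $\|x\|_2\le\mathrm{rad}(K)\le1$, we have $0\le 1-v(x)\le\|x\|_2^2/(2\lambda^2)\le 1/(2\lambda^2)$. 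For the Gaussian width bound, write $v(x)g_{n+1}=g_{n+1}-(1-v(x))g_{n+1}$ to get $\langle Q_\lambda(x),g\rangle\le|\langle x,g'\rangle|/\lambda+g_{n+1}+|g_{n+1}|/(2\lambda^2)$; taking $\sup_{x\in K}$ and then the expectation, and using $\E g_{n+1}=0$ and $\E|g_{n+1}|=\sqrt{2/\pi}$, yields $w(Q_\lambda(K))\le\lambda^{-1}w_*(K)+\tfrac12\sqrt{2/\pi}\,\lambda^{-2}$. Finally the hypothesis $w_*(K)\ge c\lambda^{-1}$ gives $\lambda^{-2}\le c^{-1}\lambda^{-1}w_*(K)$, so $w(Q_\lambda(K))\le C_1\lambda^{-1}w_*(K)$ with $C_1=1+\tfrac1{2c}\sqrt{2/\pi}$.

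For the Gaussian complexity bound I would use the same decomposition but only the cruder estimate $0<v(x)\le1$: then $|\langle Q_\lambda(x),g\rangle|\le|\langle x,g'\rangle|/\lambda+|g_{n+1}|$, and taking $\sup_{x\in K}$ and the expectation gives $w_*(Q_\lambda(K))\le\lambda^{-1}w_*(K)+\sqrt{2/\pi}$. Now the extra hypothesis $w_*(K)\ge c\lambda$ gives $1\le c^{-1}\lambda^{-1}w_*(K)$, so one can absorb the constant term and obtain $w_*(Q_\lambda(K))\le C_2\lambda^{-1}w_*(K)$ with $C_2=1+c^{-1}\sqrt{2/\pi}$. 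The only point needing care is that the additive errors ($\lambda^{-2}$ for the width, an honest constant for the complexity) cannot be removed by geometry and must be swallowed using exactly the stated lower bounds on $w_*(K)$, which is why those hypotheses appear; and $\mathrm{rad}(K)\le1$ is what makes the remainder $1-v(x)$ of order $\lambda^{-2}$, formalizing the intuition that $Q_\lambda(K)$ collapses toward the single point $e_{n+1}$ (plus a roughly $\lambda^{-1}$-scaled copy of $K$) as $\lambda\to\infty$.
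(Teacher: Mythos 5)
Your proof is correct and, for the first assertion, takes essentially the same route as the paper: both decompose $\langle Q_\lambda(x), g\rangle$ into the contribution of the first $n$ coordinates (bounded by $|\langle x,g'\rangle|/\lambda$ since $\|x\oplus\lambda\|_2\ge\lambda$) and the contribution of the last coordinate, then control the deviation of $\lambda/\|x\oplus\lambda\|_2$ from $1$ by $O(\lambda^{-2})$ using $\mathrm{rad}(K)\le 1$. The paper handles that last-coordinate term by splitting on the sign of $\tau=g_{n+1}$ and invoking symmetry, while you write $v(x)g_{n+1}=g_{n+1}-(1-v(x))g_{n+1}$ with $0\le 1-v(x)\le\|x\|_2^2/(2\lambda^2)$, which is a slightly cleaner and marginally tighter bookkeeping of the same estimate. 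Where you genuinely diverge is the second assertion: the paper deduces $w_*(Q_\lambda(K))\lesssim \lambda^{-1}w_*(K)$ from the first assertion plus the general equivalence $w_*(T)\sim w(T)+1$ for $T\subset\mathbb{S}^n$ (citing Vershynin), whereas you prove it directly by the crude bound $|\langle Q_\lambda(x),g\rangle|\le|\langle x,g'\rangle|/\lambda+|g_{n+1}|$ followed by absorption using $w_*(K)\ge c\lambda$. Your version is self-contained and avoids the auxiliary lemma, at the cost of not reusing the first assertion; either is fine here.
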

\vspace{-0.45cm}
\begin{proof}
Writing $g' = (g, \tau)$ with $g' \sim N(0,I_n)$ and $\tau \sim N(0,1)$ independent, we find
\begin{align}w(Q_{\lambda}(K)) &= \E\left(\sup_{x \in K} \left\langle \frac{x \oplus \lambda}{\|x \oplus \lambda\|_2}, g' \right\rangle\right) \notag \\
    &= \E\left(\sup_{x \in K} \left\langle \frac{x}{\|x \oplus \lambda\|_2}, g \right\rangle + \tau \frac{\lambda}{\|x \oplus \lambda\|_2}\right) \notag \\
    &\leq \E\left(\sup_{x \in K} \left|\left\langle \frac{x}{\|x \oplus \lambda\|_2}, g \right\rangle\right|\right) + 
    \mathbb{E}\left(\sup_{x \in K} \tau \frac{\lambda}{\|x \oplus \lambda\|_2}\right) \notag \\
    &\leq \frac{1}{\lambda} w_*(K) + \mathbb{E}\left(\sup_{x \in K} \tau 1_{\{\tau > 0\}} \frac{\lambda}{\|x \oplus \lambda\|_2}\right)+ \mathbb{E} \left(\sup_{x \in K} \tau 1_{\{\tau \leq 0\}} \frac{\lambda}{\|x \oplus \lambda\|_2}\right) \notag\\
        &\leq \frac{1}{\lambda} w_*(K) + \mathbb{E}(\tau 1_{\{\tau > 0\}}) + \mathbb{E} \left(\tau 1_{\{\tau \leq 0\}} \frac{\lambda}{\sqrt{1+\lambda^2}}\right) \label{eq:rad} \\
        &\leq \frac{1}{\lambda} w_*(K) + \mathbb{E}(\tau 1_{\{\tau > 0\}}) + \mathbb{E} \left(-\tau 1_{\{\tau \geq 0\}} \frac{\lambda}{\sqrt{1+\lambda^2}}\right) \label{eq:sym} \\
    &\leq \frac{1}{\lambda} w_*(K) + \mathbb{E}(\lambda |\tau|) \left(\frac{1}{\lambda} - \frac{1}{\sqrt{1 + \lambda^2}}\right)\notag\\
    &\leq \frac{1}{\lambda} w_*(K) + \frac{1}{\lambda^{2}} \E(|\tau|) \lesssim \frac{1}{\lambda} w_*(K), \label{eq:lambdadiff} 
\end{align}
where in \eqref{eq:rad} we used that $\mathrm{rad}(K) \leq 1$, \eqref{eq:sym} follows from symmetry of $\tau$, and in \eqref{eq:lambdadiff} we applied \sjoerd{Lemma~\ref{lem:lambdaEst}} and used $w_*(K) \gtrsim \frac{1}{\lambda}$. \sjoerd{The second assertion follows immediately from the fact that 
$$w_*(Q_{\lambda}(K))\sim w(Q_{\lambda}(K))+1,$$
see, e.g., \cite[Exercise 7.6.9]{vershynin2018high}.}  
\end{proof}
In \cite[Section 6]{plan2014dimension} it was already observed that $w_*(Q_{\lambda}(K)) \lesssim w_*(K)$ if $\lambda\geq 1$. Crucial for our argument is that by choosing $\lambda \sim \delta^{-1}$ in Lemma~\ref{lem:lifting1D} we obtain
$$w_*(Q_{\lambda}(K)) \lesssim w_*(K)\lambda^{-1} \sim \delta w_*(K)$$
if $w_*(K)\gtrsim \delta^{-1}$. As guarantees for Gaussian hyperplane tessellations scale in terms of the square of the Gaussian complexity, this will allows us to shave off a factor $\delta^2$ from our distortion dependence (see \eqref{eqn:deltaShaving} below). 

\subsection{A necessary condition for Euclidean tessellations}

Thanks to the lifting argument, we can now derive a necessary condition on the number of \emph{homogeneous} Gaussian hyperplanes needed to induce a \emph{spherical} uniform tessellation by establishing a necessary condition on the number of \emph{affine} Gaussian hyperplanes needed to induce a \emph{Euclidean} uniform tessellation. We will establish the latter by adapting an argument based on the Dvoretzky-Milman theorem from \cite[Theorem 1.10]{dirksen2022sharp}, who considered hyperplanes with Gaussian directions and uniformly distributed shifts. We will use the following upper bound on the inverse cdf of a folded standard normal random variable. 
\begin{lemma}\label{lm:inversecumgauss}
    Let $\sjoerd{\tau} \sim N(0,1)$ and let $\gamma$ be the number such that $P(|\sjoerd{\tau}| \leq \gamma) = \frac{2k}{m}$ with $k \leq \frac{m}{6}$. \sjoerd{Then,} 
	\begin{equation}
	\label{eqn:inversecumgauss}
	\sqrt{\frac{\pi}{2}} \frac{2k}{m} \leq \gamma \leq 2 \sqrt{\frac{\pi}{2}} \frac{2k}{m}.
	\end{equation}
\end{lemma}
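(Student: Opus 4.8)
The plan is to work directly with the density of the folded Gaussian $|\tau|$, namely $f(t) = \sqrt{2/\pi}\,e^{-t^2/2}$ for $t \ge 0$, so that, writing $p := \tfrac{2k}{m}$, we have $p = P(|\tau| \le \gamma) = \int_0^\gamma \sqrt{2/\pi}\,e^{-t^2/2}\,dt$. The hypothesis $k \le m/6$ enters only through the resulting bound $p \le \tfrac13$. The lower bound in \eqref{eqn:inversecumgauss} is then immediate: since $e^{-t^2/2} \le 1$ on $[0,\gamma]$, we get $p \le \sqrt{2/\pi}\,\gamma$, hence $\gamma \ge \sqrt{\pi/2}\,p$.

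For the upper bound I would first establish a crude a priori bound on $\gamma$, so as to be able to bound the density on $[0,\gamma]$ from below. Using $e^{-t^2/2} \ge e^{-1/2}$ on $[-1,1]$ gives $P(|\tau| \le 1) \ge \sqrt{2/\pi}\,e^{-1/2} > \tfrac13 \ge p$; since $t \mapsto P(|\tau| \le t)$ is strictly increasing, this forces $\gamma < 1$. Consequently $e^{-t^2/2} \ge e^{-1/2} > \tfrac12$ for every $t \in [0,\gamma]$, so
$$p = \int_0^\gamma \sqrt{2/\pi}\,e^{-t^2/2}\,dt \ \ge\ \tfrac12\sqrt{2/\pi}\,\gamma,$$
which rearranges to $\gamma \le 2\sqrt{\pi/2}\,p$, as claimed.

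I do not expect a real obstacle here; the only point needing a bit of care is the a priori bound $\gamma < 1$ — equivalently, that the constraint $k \le m/6$ keeps $\gamma$ in the regime where the Gaussian density has not yet decayed past $\tfrac12$. In fact any bound of the form $p \le 1-\varepsilon_0$ with $\varepsilon_0$ not too small would suffice, at the cost of replacing the constant $2$ in front of $\sqrt{\pi/2}\,p$ by a larger one. One could alternatively run the argument through the mean value theorem applied to the CDF of $|\tau|$, but the direct density estimates above are cleaner and make the role of the two-sided bound on $f$ transparent.
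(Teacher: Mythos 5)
Your proof is correct, and for the upper bound it takes a genuinely simpler route than the paper. The paper's upper-bound argument lower-bounds the density via the Taylor estimate $e^{-t^2/2}\ge 1-t^2/2$, integrates to $\gamma-\gamma^3/6$, uses $\gamma^3\le\gamma^2$ (assuming $\gamma\le 1$), and then solves the resulting quadratic $\gamma^2-6\gamma+6\sqrt{\pi/2}\,\tfrac{2k}{m}=0$, bounding the small root $\gamma_-$ via $3-\sqrt{9-6t}\le 2t$ to extract $\gamma\le 2\sqrt{\pi/2}\,\tfrac{2k}{m}$. You instead observe that on $[0,1]$ the density of $|\tau|$ is bounded below by the constant $\sqrt{2/\pi}\,e^{-1/2}>\tfrac12\sqrt{2/\pi}$, which yields the same constant $2$ in a single line with no quadratic to solve. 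A further point in your favour: the paper's argument implicitly assumes $\gamma\le 1$ and only checks post hoc that $2\sqrt{\pi/2}\,\tfrac{2k}{m}\le 1$, whereas you explicitly establish the a priori bound $\gamma<1$ from $p\le\tfrac13$ and monotonicity of the CDF before using it, which is cleaner logically. The lower bound argument is identical in both. The one thing to make sure is airtight when writing this up is the intermediate estimate $P(|\tau|\le 1)\ge\sqrt{2/\pi}\,e^{-1/2}>\tfrac13$: it is correct ($\sqrt{2/\pi}\,e^{-1/2}\approx 0.484$), but it is worth stating the numerical check explicitly since it is the lynchpin that makes the constant-density bound available.
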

\begin{proof}
The lower bound is clearly obtained by considering	
	\begin{align*}
		\mathbb{P}(|\sjoerd{\tau}| \leq \gamma) = \sqrt{\frac{2}{\pi}}\int_0^{\gamma} \exp(-t^2/2) dt \leq \sqrt{\frac{2}{\pi}} \gamma < \frac{2k}{m}.
	\end{align*}
	whenever $\gamma < \sqrt{\frac{\pi}{2}} \frac{2k}{m}$.\par
	Let us now derive the upper bound on $\gamma$. We first note that
	\begin{align*}
		\sqrt{\frac{2}{\pi}} \int_0^\gamma \exp(-t^2/2)dt \geq \sqrt{\frac{2}{\pi}} \int_0^\gamma 1 - \frac{t^2}{2} dt = \sqrt{\frac{2}{\pi}}\left[ \gamma - \frac{\gamma^3}{6}\right].
	\end{align*}	
	If we assume $\gamma \in [0,1]$, then $\gamma^3 \leq \gamma^2$ and hence\sjoerd{
	\begin{align}
	\label{eqn:cdfFoldedLB}
	\mathbb{P}(|\sjoerd{\tau}| \leq \gamma) & \geq \sqrt{\frac{2}{\pi}} \left[\gamma - \frac{\gamma^2}{6}\right] = \frac{2k}{m} - \frac{1}{6}\sqrt{\frac{2}{\pi}}\left[\gamma^2-6\gamma +6 \sqrt{\frac{\pi}{2}} \frac{2k}{m}\right] = \frac{2k}{m} - \frac{1}{6}\sqrt{\frac{2}{\pi}}(\gamma-\gamma_+)(\gamma-\gamma_-),
	\end{align}
	where 
	$$\gamma_{\pm}=3 \pm \sqrt{9 - 6 \sqrt{\frac{\pi}{2}} \frac{2k}{m}}.$$
	The right hand side of \eqref{eqn:cdfFoldedLB} is at least $2k/m$ if $\gamma_-\leq \gamma\leq \gamma_+$. Since $3 - \sqrt{9 - 6t} \leq 2t$ for all admissible $t\geq 0$, we see that 
	$$\gamma_-\leq 2 \sqrt{\frac{\pi}{2}} \frac{2k}{m}.$$
	In conclusion, $\mathbb{P}(|\sjoerd{\tau}| \leq \gamma) \geq 2k/m$ if 
$$2 \sqrt{\frac{\pi}{2}} \frac{2k}{m}\leq \gamma\leq 1.$$	 
Thus, the upper bound in \eqref{eqn:inversecumgauss} follows if} $4 \sqrt{\tfrac{\pi}{2}} \frac{k}{m} \leq 1$, which certainly holds when $k \leq \frac{m}{6}.$
\end{proof}
\begin{proposition}\label{prop:shiftbound}
Let $(\lambda_i)_{i = 1}^m$ be i.i.d. $N(0,\lambda^2)$ random variables. Then, \sjoerd{for any $k \leq \frac{m}{6}$ there exist} at least $k$ indices such that $|\lambda_i| \leq  4 \sqrt{\frac{\pi}{2}} \lambda\frac{k}{m}$ with probability at least $1 - 2\exp(-ck)$. 
\end{proposition}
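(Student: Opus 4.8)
The plan is to reduce the statement to a standard Binomial lower-tail estimate. First I would write $\lambda_i = \lambda\tau_i$ with $\tau_i \sim N(0,1)$ i.i.d., so that $|\lambda_i|\le t$ if and only if $|\tau_i|\le t/\lambda$. Let $\gamma>0$ be the number with $\mathbb{P}(|\tau|\le\gamma)=\tfrac{2k}{m}$; this is well defined since $k\le m/6$ forces $\tfrac{2k}{m}\le\tfrac13<1$, and Lemma~\ref{lm:inversecumgauss} applies and yields $\gamma\le 2\sqrt{\tfrac{\pi}{2}}\cdot\tfrac{2k}{m}=4\sqrt{\tfrac{\pi}{2}}\,\tfrac{k}{m}$. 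Hence the event $\{|\tau_i|\le\gamma\}$ is contained in $\{|\lambda_i|\le 4\sqrt{\tfrac{\pi}{2}}\,\lambda\,\tfrac{k}{m}\}$, so it suffices to show that at least $k$ of the $\tau_i$ satisfy $|\tau_i|\le\gamma$ with the claimed probability.

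Next I would set $N:=\#\{i\in\{1,\dots,m\}:|\tau_i|\le\gamma\}$. By construction $N$ is a sum of $m$ i.i.d.\ Bernoulli variables with success probability $p=\tfrac{2k}{m}$, so $N\sim\mathrm{Bin}(m,\tfrac{2k}{m})$ with $\mathbb{E}N=2k$. The desired event $\{N\ge k\}$ is a lower-tail deviation by a constant fraction of the mean, which I would control with the multiplicative Chernoff bound: for $X\sim\mathrm{Bin}(m,p)$ with $\mu=mp$ and $0<\theta<1$, $\mathbb{P}(X\le(1-\theta)\mu)\le\exp(-\theta^2\mu/2)$. Applying this with $\theta=\tfrac12$ and $\mu=2k$ gives $\mathbb{P}(N< k)\le\mathbb{P}(N\le k)\le\exp(-k/4)$.

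Combining the two steps: with probability at least $1-\exp(-k/4)$ we have $N\ge k$, i.e.\ at least $k$ indices with $|\tau_i|\le\gamma\le 4\sqrt{\tfrac{\pi}{2}}\,\tfrac{k}{m}$, equivalently $|\lambda_i|\le 4\sqrt{\tfrac{\pi}{2}}\,\lambda\,\tfrac{k}{m}$. Taking $c=\tfrac14$ (and using the harmless slack $1-2\exp(-ck)\le 1-\exp(-k/4)$) concludes the proof. I do not expect a genuine obstacle here: the only points needing a moment's care are verifying that $\gamma$ is well defined and that Lemma~\ref{lm:inversecumgauss} is applicable (both guaranteed by $k\le m/6$), and recording an explicit admissible constant $c$ from the Chernoff estimate; if one prefers not to quote the multiplicative Chernoff inequality, the same bound follows from a direct exponential-moment computation for $N$.
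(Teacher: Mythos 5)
Your proof is correct and follows essentially the same route as the paper: both define a counting variable for the small-shift indices, use Lemma~\ref{lm:inversecumgauss} to lower-bound the success probability by $2k/m$ (you do this by passing through an intermediate threshold $\gamma$ with exact success probability $2k/m$, while the paper bounds the mean directly), and finish with a Chernoff lower-tail estimate. The extra care you take in verifying that $\gamma$ is well defined and in recording an explicit constant $c=1/4$ is fine but not a genuinely different argument.
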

\begin{proof}
    Let  $X = \sum_{i = 1}^m 1_{\{|\lambda_i| \leq 4 \sqrt{\frac{\pi}{2}} \lambda \frac{k}{m}\}}$ be the number of indices that satisfy the bound.
    This is a sum of i.i.d.\ Bernoulli random variables with mean $\mu$ where $\mu := \mathbb{P}(|\lambda_i| \leq 4 \sqrt{\frac{\pi}{2}} \lambda \frac{k}{m}) =  \mathbb{P}(|\frac{\lambda_i}{\lambda}| \leq 4 \sqrt{\frac{\pi}{2}} \frac{k}{m}) \geq \frac{2k}{m}$ by Lemma \ref{lm:inversecumgauss} as $\frac{\lambda_i}{\lambda}$ has a standard normal distribution. Therefore, by the Chernoff bound
    $$\mathbb{P}(X \geq k) \geq \mathbb{P}\left(X \geq \frac{1}{2} m \mu\right) \geq 1 - 2\exp(-c k)$$
    as $m \mu \geq m \frac{2k}{m} = 2k$.  
\end{proof}
Finally, we will use the Dvoretzky-Milman theorem. Let
$$
d_*(T) = \left(\frac{w_*(T)}{\mathrm{rad}(T)}\right)^2
$$
denote the \emph{Dvoretzky-Milman dimension} (or \emph{stable dimension}) of $T$. Recall that a set $K\subset \R^n$ is called a convex body if it is a convex, centrally-symmetric ($K=-K$) set with a nonempty interior.
\begin{theorem} \label{thm:DvorMil}
There are absolute constants $c_1,c_2,$ and $c_3$ such that the following holds. Let $K \subset \R^n$ be a convex body and let $k\leq c_1d_*(K)$. If $G\in \R^{k\times n}$ is standard Gaussian, then with probability at least $1-2\exp(-c_2 d_*(K))$,
$$
B_2^k(0;c_3 w_*(K)) \subset GK.
$$
\end{theorem}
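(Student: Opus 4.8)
The plan is to reformulate the desired inclusion $B_2^k(0;c_3 w_*(K))\subseteq GK$ as a lower bound on the inradius of the random convex body $GK$, pin that inradius near its mean by Gaussian concentration, and control the mean via Gordon's comparison inequality. We may assume $K$ is compact. Since $K$ is a centrally-symmetric convex body, $GK$ is closed, convex and centrally symmetric, and a routine separation argument shows that for every $r>0$,
$$B_2^k(0;r)\subseteq GK\ \Longleftrightarrow\ \inf_{\theta\in\mathbb{S}^{k-1}}h_{GK}(\theta)\geq r,\qquad h_L(\theta):=\sup_{y\in L}\langle y,\theta\rangle.$$
Since $h_{GK}(\theta)=\sup_{x\in K}\langle Gx,\theta\rangle=\sup_{x\in K}\langle x,G^{\top}\theta\rangle=h_K(G^{\top}\theta)$, it suffices to prove that $\Phi(G):=\inf_{\theta\in\mathbb{S}^{k-1}}\sup_{x\in K}\langle\theta,Gx\rangle\geq c_3 w_*(K)$ with the claimed probability.

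The second step is concentration. Subadditivity of the support function gives $|h_K(u)-h_K(v)|\leq\max\{h_K(u-v),h_K(v-u)\}\leq\mathrm{rad}(K)\,\|u-v\|_2$, so
$$|\Phi(G_1)-\Phi(G_2)|\leq\sup_{\theta\in\mathbb{S}^{k-1}}\big|h_K(G_1^{\top}\theta)-h_K(G_2^{\top}\theta)\big|\leq\mathrm{rad}(K)\,\|G_1-G_2\|_F,$$
i.e.\ $G\mapsto\Phi(G)$ is $\mathrm{rad}(K)$-Lipschitz on $\R^{k\times n}\cong\R^{kn}$. By Gaussian concentration for Lipschitz functions, for every $t>0$,
$$\mathbb{P}\big(\Phi(G)<\mathbb{E}\,\Phi(G)-t\big)\leq\exp\!\Big(-\tfrac{t^2}{2\,\mathrm{rad}(K)^2}\Big).$$

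The crux is to lower bound $\mathbb{E}\,\Phi(G)$. I would apply Gordon's Gaussian comparison inequality (see, e.g., \cite{vershynin2018high}) to the bilinear process $(\theta,x)\mapsto\langle\theta,Gx\rangle$ on $\mathbb{S}^{k-1}\times K$: with $g\in\R^n$, $h\in\R^k$ independent standard Gaussian vectors,
$$\mathbb{E}\,\Phi(G)\ \geq\ \mathbb{E}\,\inf_{\theta\in\mathbb{S}^{k-1}}\sup_{x\in K}\big(\langle g,x\rangle+\|x\|_2\langle h,\theta\rangle\big).$$
For fixed $\theta$ the inner supremum is a convex function of $s=\langle h,\theta\rangle$, equals $h_K(g)$ at $s=0$, and (taking a maximizer $x_0$ at $s=0$ and using $\|x_0\|_2\leq\mathrm{rad}(K)$ as a subgradient there) is at least $h_K(g)-\mathrm{rad}(K)\,|s|\geq h_K(g)-\mathrm{rad}(K)\,\|h\|_2$; this bound is uniform in $\theta$. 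Taking expectations, using $\mathbb{E}\,h_K(g)=w(K)=w_*(K)$ by central symmetry of $K$ and $\mathbb{E}\,\|h\|_2\leq\sqrt{k}$, we obtain $\mathbb{E}\,\Phi(G)\geq w_*(K)-\mathrm{rad}(K)\sqrt{k}$. Because $k\leq c_1 d_*(K)=c_1(w_*(K)/\mathrm{rad}(K))^2$, the loss $\mathrm{rad}(K)\sqrt{k}$ is at most $\sqrt{c_1}\,w_*(K)$, so $\mathbb{E}\,\Phi(G)\geq(1-\sqrt{c_1})w_*(K)$.

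To finish, fix $c_1$ small enough that $1-\sqrt{c_1}\geq\tfrac{1}{2}$, take $t=\tfrac{1}{4} w_*(K)$ in the concentration bound, and use $w_*(K)^2/\mathrm{rad}(K)^2=d_*(K)$: then $\Phi(G)\geq\tfrac{1}{4} w_*(K)$ with probability at least $1-\exp(-d_*(K)/32)$, which gives the statement with $c_3=\tfrac{1}{4}$ and $c_2=\tfrac{1}{32}$ (the factor $2$ in the statement being slack). The main obstacle is the expectation bound: one must invoke the form of Gordon's inequality valid for a set $K$ that need not lie on the sphere — which is exactly why the auxiliary process carries the weight $\|x\|_2$ rather than a constant — and verify that the subtracted term $\mathrm{rad}(K)\sqrt k$ is absorbed by the hypothesis $k\lesssim d_*(K)$; the reduction and concentration steps are routine once the support-function viewpoint is adopted.
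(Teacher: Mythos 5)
Your proposal is essentially correct; note that the paper does not prove Theorem~\ref{thm:DvorMil} but cites \cite{mendelson2016dvoretzky} and \cite[Section~11.3]{vershynin2018high}, so the comparison is to those references rather than to an in-paper argument. Your reduction to the support-function inequality $\inf_{\theta\in\mathbb{S}^{k-1}}h_{GK}(\theta)\geq c_3 w_*(K)$, the observation that $\Phi$ is $\mathrm{rad}(K)$-Lipschitz in Frobenius norm and the resulting Gaussian concentration step, the Gordon-style bound $\mathbb{E}\Phi(G)\geq w_*(K)-\sqrt{k}\,\mathrm{rad}(K)$, the use of $w(K)=w_*(K)$ for centrally symmetric $K$, and the final choice of constants are all sound; the subgradient argument showing the auxiliary process is at least $h_K(g)-\mathrm{rad}(K)\,\|h\|_2$ for every $\theta$ is correct. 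One point deserves care that you gloss over: the naive Slepian--Gordon covariance conditions (in particular equal variances) are \emph{not} satisfied by the pair $X_{\theta,x}=\langle\theta,Gx\rangle$ and $Y_{\theta,x}=\langle g,x\rangle+\|x\|_2\langle h,\theta\rangle$; one either has to add the scalar correction $\|x\|_2\|\theta\|_2\gamma$ to the primary process, or invoke Gordon's escape-through-a-mesh lemma in its stated form, which delivers exactly $\mathbb{E}\Phi(G)\geq w(K)-a_k\,\mathrm{rad}(K)$ with $a_k=\mathbb{E}\|h\|_2$. You acknowledge this obstacle but should make the exact form of the comparison theorem explicit, since the variance mismatch means the comparison is not an off-the-shelf application.

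As for the route: this is the classical Milman--Gordon proof, which is specific to the Gaussian ensemble and gives clean constants directly from a single Gaussian comparison plus Borell--TIS concentration. The references the paper cites take a different path: \cite[Section~11.3]{vershynin2018high} derives the inclusion from the matrix deviation inequality (itself proved via Talagrand's comparison and generic chaining), and \cite{mendelson2016dvoretzky} works with sub-Gaussian coordinate projections via chaining. Those arguments cover a broader class of random matrices, at the cost of more machinery; your Gordon-based argument is more elementary in the Gaussian case, which is all that Theorem~\ref{thm:DvorMil} requires.
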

A proof of Theorem~\ref{thm:DvorMil} can be found in, e.g., \cite{mendelson2016dvoretzky} and \cite[Section 11.3]{vershynin2018high}. We are now ready to establish the necessary condition for Euclidean uniform tessellations. 
\begin{theorem}
\label{thm:lowerbound}
There exist absolute constants $c_1,c_2,c_3,c_4,c_5,c_6>0$ such that the following holds. Assume that \sjoerd{$m \leq c_1\lambda \delta^{-3} w_*(K \cap B_2^n(0;\delta))^2$. Let $G\in \R^{m\times n}$ and $\tau\in \R^m$ be standard Gaussian and independent. Let $K$ be a convex body \sjoerd{with $3\delta\leq \mathrm{rad}(K) \leq 1$} and let \sjoerd{$\lambda \geq c_2\delta$}. Set
$$k^*:=\min\left\{c_3w_*(K \cap B_2^n(0;\delta))^2\delta^{-2}, c_4\left(\frac{m}{\lambda}w_*(K \cap B_2^n(0;\delta))\right)^{2/3},c_5m\right\}.$$
Then, with probability at least $1-4e^{-c_6k^*}$, the pair $(G,\lambda \tau)$ fails to induce a Euclidean $\delta$-tessellation of $K$, i.e., 
$$\left|\pi \lambda \hat{d}_{G',\lambda}(x,y) - \|x - y\|_2\right| > \delta$$
for certain $x,y \in K$. 
} 
\end{theorem}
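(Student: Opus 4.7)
The plan is to follow the Dvoretzky–Milman strategy of \cite[Theorem~1.10]{dirksen2022sharp}: exhibit a pair of antipodal points $x,-x$ in the small convex body $K' := K \cap B_2^n(0;\delta)$ such that the affine hyperplanes flip sign on many coordinates at once, yielding a Hamming distance that is much larger than the Euclidean distance $\|x-(-x)\|_2\leq 2\delta$. To make the signs flip simultaneously, I would first isolate $k=\lfloor k^*\rfloor$ coordinates on which the shifts are tiny, and then use the Dvoretzky–Milman theorem to show that the Gaussian image of $K'$ on precisely these coordinates already contains a Euclidean ball whose radius dominates those shifts.

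More concretely, since $k^*\leq c_5 m\leq m/6$, Proposition~\ref{prop:shiftbound} provides, with probability at least $1-2e^{-ck^*}$, an index set $I\subset\{1,\ldots,m\}$ with $|I|=k$ and $|\lambda\tau_i|\leq L:=4\sqrt{\pi/2}\,\lambda k/m$ for every $i\in I$. Condition on $\tau$; since $G$ is independent of $\tau$, the submatrix $G_I\in\R^{k\times n}$ is still standard Gaussian. Because $K$ is a (symmetric) convex body and $\mathrm{rad}(K)\geq 3\delta$, the set $K'$ is itself a convex body with $\mathrm{rad}(K')\leq \delta$, so $d_*(K')\geq w_*(K')^2/\delta^2$. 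Choosing $c_3\leq c_1$ (the Dvoretzky–Milman constant) in the first term of $k^*$ guarantees $k\leq c_1 d_*(K')$, and Theorem~\ref{thm:DvorMil} yields
$$B_2^{k}(0;\,c_3' w_*(K'))\subset G_I K'$$
with probability at least $1-2e^{-c_2 d_*(K')}\geq 1-2e^{-c_6 k^*}$.

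Now set $v=2L\cdot(1,\ldots,1)\in\R^k$, of norm $2L\sqrt{k}=8\sqrt{\pi/2}\,\lambda k^{3/2}/m$. The second term in $k^*$ is precisely calibrated so that $\|v\|_2\leq c_3' w_*(K')$, meaning $v\in G_I K'$. Picking $x\in K'$ with $G_I x=v$, we get $\langle x,g_i\rangle+\lambda\tau_i\geq 2L-L>0$ for every $i\in I$, while by central symmetry of $K'$ the vector $-x\in K'$ satisfies $\langle -x,g_i\rangle+\lambda\tau_i\leq -L<0$. Hence $\hat d_{G',\lambda}(x,-x)\geq k/m$. Combined with $\|x-(-x)\|_2\leq 2\delta$, the failure of the $\delta$-tessellation reduces to checking $\pi\lambda k/m>3\delta$, i.e.\ $k>3\delta m/(\pi\lambda)$. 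One verifies this in each of the three cases defining $k^*$: for the first two terms it follows directly from the hypothesis $m\leq c_1\lambda\delta^{-3}w_*(K')^2$ by choosing $c_1$ sufficiently small relative to $c_3,c_4$, and for the third term it follows from $\lambda\geq c_2\delta$ with $c_2$ large enough relative to $c_5$. Union bounding over the two bad events gives the claimed probability $1-4e^{-c_6 k^*}$.

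The main obstacle, and the reason for the three-term definition of $k^*$, is that three different constraints must be satisfied simultaneously: (i) the shift bound requires $k\leq m/6$; (ii) Dvoretzky–Milman requires $k\lesssim w_*(K')^2/\delta^2$; and (iii) the ball delivered by Dvoretzky–Milman must be large enough to contain $v$, which forces $k\lesssim(mw_*(K')/\lambda)^{2/3}$. The factor $(m w_*/\lambda)^{2/3}$ is the genuinely new ingredient compared to the homogeneous setting and is the source of the $\delta^{-3}$ (rather than $\delta^{-2}$) scaling, so the delicate part of the argument is balancing the magnitude $L$ of the shifts against the Dvoretzky–Milman radius to extract precisely this exponent. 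Once the three constraints are reconciled, the rest of the proof is essentially bookkeeping of constants and conditioning on $\tau$ to separate the randomness of $G$ from that of $\tau$.
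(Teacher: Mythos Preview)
Your proposal is correct and follows essentially the same Dvoretzky--Milman strategy as the paper: isolate $k$ coordinates with small shifts via Proposition~\ref{prop:shiftbound}, apply Theorem~\ref{thm:DvorMil} to $G_I$ on $K'=K\cap B_2^n(0;\delta)$, and pull back a suitable target vector to force many sign flips. The only cosmetic differences are that the paper compares $x^*$ with $0$ (using the target $(-\lambda_i-\varepsilon\,\mathrm{sign}(\lambda_i))_{i\in I}$) whereas you compare $x$ with $-x$ (using the constant target $2L(1,\dots,1)$); both choices exploit the central symmetry of $K'$ and lead to the same constraint $\pi\lambda k/m>3\delta$.
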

\begin{proof}
\sjoerd{Set $\lambda_i=\lambda\tau_i$. It suffices to find an $\sjoerd{x^*}\in K \cap B_2^n(0;\delta)$ such that
\begin{equation}
\label{eqn:lowerboundToProve}
\left|\pi \lambda \frac{1}{m} \sum_{i = 1}^m 1\{\mathrm{sign}\left(\langle g_i, \sjoerd{x^*}\rangle + \lambda_i\right) \neq \mathrm{sign}\left(\langle g_i, 0 \rangle + \lambda_i\right)\} - \|\sjoerd{x^*} - 0\|_2\right| >  2\delta.
\end{equation}
If $k \leq \frac{m}{6}$, then by Proposition~\ref{prop:shiftbound} there exists a set $I'$ of at least $k$ indices such that $|\lambda_i| \leq 4 \sqrt{\frac{\pi}{2}} \lambda \frac{k}{m}$ for all $i \in I'$ with probability at least $1 - \exp(-c_0k)$ where $c_0$ is an absolute constant and $k$ will be specified later. From now on, condition on this event, which only depends on the shifts $\lambda_i$ and not on the $g_i$. Take the first $k$ indices when $I'$ is sorted in increasing order and call this index set $I$. Note that $\lambda_I := (\lambda_i)_{i \in I}$ satisfies     
\begin{equation}
\label{eqn:lambdaIbound}
\|\lambda_I\|_2 \lesssim \frac{k^{3/2}\lambda}{m}.
\end{equation}
Let $G_I\in \R^{k\times m}$ be the Gaussian random matrix with rows $g_i$, $i\in I$. Since $3\delta\leq \mathrm{rad}(K)$,  
$$d^*(K \cap B_2^n(0;\delta))\sim \frac{w_*(K \cap B_2^n(0;\delta))^2}{\delta^2}.$$ 
Hence,} by the Dvoretzky-Milman theorem (Theorem~\ref{thm:DvorMil}), there are absolute constants $c_1,c_2,c_3>0$ such that if 
$$k \leq c_1\frac{w_*(K \cap B_2^n(0;\delta))^2}{\delta^2},$$   
then
\begin{equation}
\label{eqn:DMinc}
B_2^{\sjoerd{k}}(0; c_2w_*(K \cap B_2^n(0;\delta))) \subset \sjoerd{G_I}(K \cap B_2^n(0;\delta))
\end{equation}
with probability at least \sjoerd{$1 - 2 \exp(-c_3w_*(K \cap B_2^n(0;\delta))^2/\delta^2)$}. Moreover, if 
\begin{align}
\frac{k^{3/2}\lambda}{m} \leq c_4 w_*(K \cap B_2^n(0;\delta)),\label{eq:changeshifthyp}
\end{align}
for a small enough absolute constant $c_4>0$, then we can find $\sjoerd{x^*} \in K\cap B_2^n(0;\delta)$ such that 
$$\mathrm{sign}(\langle \sjoerd{x^*}, g_{\sjoerd{i}}\rangle + \lambda_i) \neq \mathrm{sign}(\lambda_i), \qquad \text{\sjoerd{for all $i\in I$}}.$$
To see this, note first that for any $\varepsilon>0$ 
$$\mathrm{sign}((-\lambda_i - \varepsilon\mathrm{sign}(\lambda_i)) + \lambda_i) \neq \mathrm{sign}(\lambda_i), \qquad \text{\sjoerd{for all $i\in I$}}.$$
Moreover, by \eqref{eqn:lambdaIbound}, the absolute constant $c_4$ in \eqref{eq:changeshifthyp} can be chosen small enough so that 
$$\|\lambda_I\|_2 < c_2w_*(K \cap B_2^n(0;\delta))$$
with $c_2$ as in \eqref{eqn:DMinc}. We can then find an $\varepsilon > 0$ sufficiently small such that 
$$\|(-\lambda_i - \varepsilon\mathrm{sign}(\lambda_i))_{i \in I}\|_2 \leq c_2 w_*(K \cap B_2^n(0;\delta)).$$
By \eqref{eqn:DMinc}, we can then represent $(-\lambda_i - \varepsilon\mathrm{sign}(\lambda_i))_{i \in I}$ as $(\langle \sjoerd{x^*}, g_i\rangle)_{i \in I}$ for some $\sjoerd{x^*} \in K\cap B_2^n(0;\delta)$.\par
In conclusion, there are absolute constants $c_5,c_6,c_7>0$ such that if
$$k^* = \min\left\{c_5w_*(K \cap B_2^n(0;\delta))^2\delta^{-2},c_6\left(\frac{m}{\lambda}w_*(K \cap B_2^n(0;\delta))\right)^{2/3},\frac{m}{6}\right\}$$
then with probability at least 
$$(1 - 2 \exp(-c_3w_*(K \cap B_2^n(0;\delta))^2/\delta^2))(1-2\exp(-c_0k^*)) \geq 1-4\exp(-c_7k^*),$$
there exists an $\sjoerd{x^*} \in K\cap B_2^n(0;\delta)$ such that 
\begin{align*} 
& \pi \lambda \frac{1}{m} \sum_{i = 1}^m 1\{\mathrm{sign}\left(\langle g_i, \sjoerd{x^*}\rangle + \lambda_i\right) \neq \mathrm{sign}\left(\langle g_i, 0 \rangle + \lambda_i\right)\}\\
& \qquad \geq \pi \lambda \frac{k^*}{m} = \min\left\{c_4\pi\frac{\lambda w_*(K \cap B_2^n(0;\delta))^2}{m \delta^2},c_5\pi\frac{\lambda^{1/3}}{m^{1/3}} w_*(K \cap B_2^n(0;\delta))^{2/3},\frac{\lambda \pi}{6}\right\}.
\end{align*}
Hence, \eqref{eqn:lowerboundToProve} holds if the right hand side is at least $3\delta$, which is satisfied if $m \leq c_8\lambda w_*(K \cap B_2^n(0;\delta))^2\delta^{-3}$ with $c_8$ a small enough absolute constant and $\lambda\geq 6\delta/\pi$.
\end{proof}
\begin{remark}
\label{rem:mLowerBound}
\sjoerdnew{The probability estimate in Theorem~\ref{thm:lowerbound} is only nontrivial if $m$ is sufficiently large (and $w_*(K \cap B_2^n(0;\delta))$ is sufficiently large). Let us note that we may always assume that $m > \lambda/2$ if $\delta\leq 1$. Indeed, suppose that $\lambda/m \geq 2$ and consider an $x^*\in K$ with $3 \delta\leq \|x^*\|\leq 1$. Then for any realisation of $G$ and $\tau$ we have either $$\sum_{i = 1}^m 1\{\mathrm{sign}\left(\langle g_i, x^*\rangle + \lambda_i\right) \neq \mathrm{sign}\left(\langle g_i, 0 \rangle + \lambda_i\right)\} = 0$$ \emph{or} $$\sum_{i = 1}^m 1\{\mathrm{sign}\left(\langle g_i, x^*\rangle + \lambda_i\right) \neq \mathrm{sign}\left(\langle g_i, 0 \rangle + \lambda_i\right)\} \geq 1.$$
In the first case, \eqref{eqn:lowerboundToProve} holds as $\|x^*\|\geq 3\delta$. In the second case, \eqref{eqn:lowerboundToProve} holds for $\delta\leq 1$ as $\lambda/m \geq 2$ and $\|x^*\|\leq 1$. Hence, in both cases $(G,\lambda \tau)$ fails to induce a Euclidean $\delta$-tessellation of $K$.} 
\end{remark}

\subsection{Proof of Theorem~\ref{thm:main}}

We consider the set 
$$K(\delta) := [-3\delta, 3\delta] \oplus B_2^{n(\delta)}(0;\varepsilon(\delta)),$$
let $\lambda=\lambda(\delta)=2C_0\delta^{-1}$\color{black} and define 
$$S(\delta) = Q_{\lambda}(K(\delta)).$$
Observe that $K(\delta)$ is a convex body and $3 \delta \leq \mathrm{rad}(K(\delta)) \leq 1$. Recall that there exist absolute constants $c_1,C_1>0$ such that
$c_1 \sqrt{n} \leq w_*(B_2^n(0;1)) \leq C_1 \sqrt{n}$.
Hence,
\begin{align*}
    c_1 \varepsilon(\delta) \sqrt{n} \leq w_*(B_2^n(0;\varepsilon(\delta))) \leq C_1 \varepsilon(\delta) \sqrt{n}.
\end{align*}
Therefore, $w_*(B_2^n(0;\varepsilon(\delta))) \sim c(\delta)$ as long as $n  = n(\delta) \sim \varepsilon(\delta)^{-2} c(\delta)^2$. In particular, if $c(\delta) \sim \lambda$ and $\varepsilon(\delta) < \delta$ then 
\begin{equation}
\label{eqn:condGWGC}
\sjoerdnew{c_3}w_*(K(\delta) \cap B_2^n(0;\delta)) \geq w_*(K(\delta))\geq w_*(K(\delta)\cap B_2^n(0;\delta))\geq c_2 \lambda.
\end{equation}\color{black}
Set $\delta_0 :=\delta^2/(4\pi C_0)$ and suppose that 
$$m\leq c_4\delta_0^{-3} w_*(S(\delta))^2$$
and let $G\in \R^{m\times n}$ and $\tau\in \R^m$ be standard Gaussian and independent. By Proposition~\ref{prop:inducedtessellation}, if $G'=[G|\tau]$ would induce a spherical $\delta_0$-uniform tessellation of $S(\delta)$, then the pair $(G,\lambda \tau)$ would induce a Euclidean $\delta$-uniform tessellation of $K(\delta)$. However, by \eqref{eqn:condGWGC} and the second assertion in Lemma~\ref{lem:lifting1D} 
\begin{align}
\label{eqn:deltaShaving}
m & \leq c_4\delta_0^{-3} w_*(S(\delta))^2\leq c_4C_2\delta^{-6} \left(\frac{w_*(K(\delta))}{\lambda}\right)^2 \nonumber\\
& = c_4C_3\lambda\delta^{-3}w_*(K(\delta))^2 \leq c_4C_3\sjoerdnew{c_3^2}\sjoerdnew{\lambda\delta^{-3}} w_*(K(\delta) \cap B_2^n(0;\delta))^2. 
\end{align}
Hence, if $c_4$ is small enough (and $\delta<c_5$ so that $\lambda\geq c_6\delta$ for $c_6$ large enough), then this would contradict the event of Theorem~\ref{thm:lowerbound}, which happens with probability at least $1-4e^{-c_7k^*}$, where
$$k^*=\min\left\{c_8w_*(K(\delta) \cap B_2^n(0;\delta))^2\delta^{-2},c_9\left(\frac{m}{\lambda}w_*(K(\delta) \cap B_2^n(0;\delta))\right)^{2/3},c_{10}m\right\}.$$
This probability exceeds $0.99$ if $m\geq C_4$ for a sufficiently large $C_4$ and $\delta<c_{11}$ for a sufficiently small $c_{11}>0$. \sjoerdnew{By Remark~\ref{rem:mLowerBound}, we may assume that $m \geq \frac{\lambda}{2} \geq C_4$ if $c_{11}$ is sufficiently small}.\par
On the other hand, if we again set $\delta_0 :=\delta^2/(4\pi C_0)$, let $\varepsilon(\delta)=c_{12}\frac{\delta_0}{\log(\delta_0)}$\color{black} and assume 
$$m \geq C_5\delta^{-4} \log N(S(\delta);\varepsilon(\delta)) + C_5\delta^{-6}w((S(\delta) - S(\delta)) \cap B(0;\varepsilon(\delta)))^2,$$
then Theorem~\ref{thm:Oymak-Recht} implies that $[G|\tau]$ induces a spherical $\frac{\delta^2}{4\pi C_0}$-uniform tessellation of $S(\delta)$ with probability at least $0.99$\color{black}. It remains to show that this condition holds if $m\geq C_6\delta^{-6} w_*(S(\delta))^2$. Clearly, it suffices to show that 
$$\log N(S(\delta);\varepsilon(\delta))\leq \delta^{-2}.$$
To see this, consider a regular grid of $[-3\delta, 3\delta]$ of mesh size $\varepsilon(\delta)$. Clearly, there are at most $1/\varepsilon(\delta)$ grid points if $\delta\leq 1/6$. For any $x=(x_1,\hat{x})\in K(\delta)$, where $x_1\in [-3\delta,3\delta]$ and $\hat{x}\in B_2^{n(\delta)}(0;\varepsilon(\delta))$. Let $\pi_1(x)$ be the grid point closest to $x_1$ and let $\pi(x)=(\pi_1(x),0)\in \R^{n(\delta)+1}$. Using Lemma~\ref{lm:distancelift} and that $\mathrm{rad}(K) \leq 10 \delta$ for $\delta<c_{14}\color{black}$ with $c_{14}\color{black}$ sufficiently small, we find
\begin{align*}
\|Q_{\lambda}(x) - Q_{\lambda}(\pi(x))\| & \leq 4\lambda^{-2} \mathrm{rad}(K(\delta))^{\sjoerd{2}}+ \lambda^{-1} \|x - \pi(x)\|_2\\
& \leq 4\lambda^{-2} \mathrm{rad}(K(\delta))^{\sjoerd{2}}+ \lambda^{-1} \|x_1 - \pi_1(x)\|_2 + \lambda^{-1}\|\hat{x}\|_2\\
& \leq 400\lambda^{-2} \delta^{\sjoerd{2}} + \frac{\delta}{2C_0} \varepsilon(\delta) +  \frac{\delta}{2C_0} \varepsilon(\delta) \leq \varepsilon(\delta)
\end{align*}
for $\delta<c_{14}\color{black}$ with $c_{14}\color{black}$ sufficiently small. Thus, 
$$\log N(S(\delta);\varepsilon(\delta))\leq \log(1/\varepsilon(\delta))\leq \delta^{-2}$$
if $c_{14}\color{black}$ is small enough.\par 
To complete the proof, we apply the results obtained above with $\delta$ replaced by $\sqrt{\delta\cdot  4\pi C_0}$. 

\appendix

\section{Two technical observations}

The following inequality states that the difference between the unnormalised spherical distance and the Euclidean distance on the sphere decays at the rate of the square of the Euclidean distance. We provide a proof for the convenience of the reader. 
\begin{lemma}\label{fct:almostlip}
There is a constant $C_0 > 0$ such that for all $w,z \in \mathbb{S}^{n-1}$, 
$$ \left|\pi d_{\mathbb{S}^{n-1}}(w,z) - \|w - z\|_2\right| \leq C_0 \|w - z\|_2^2.$$
\end{lemma}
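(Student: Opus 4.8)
The plan is to reduce everything to a one-variable estimate. Write $t := \langle w, z\rangle \in [-1,1]$, so that $\|w-z\|_2^2 = 2 - 2t$ and $\pi d_{\mathbb{S}^{n-1}}(w,z) = \arccos(t)$. The claimed inequality is then the purely scalar statement
\[
\bigl|\arccos(t) - \sqrt{2 - 2t}\,\bigr| \leq C_0 (2 - 2t) \qquad \text{for all } t \in [-1,1],
\]
which is manifestly dimension-free — the only role of $n$ was to let $t$ range over the full interval $[-1,1]$, which it does already for $n = 2$. So the whole lemma is a calculus exercise on $[-1,1]$.

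To prove the scalar inequality I would substitute $t = \cos\theta$ with $\theta \in [0,\pi]$, turning it into
\[
\Bigl|\theta - 2\sin(\theta/2)\Bigr| \leq 2 C_0 \bigl(1 - \cos\theta\bigr) = 4 C_0 \sin^2(\theta/2),
\]
using the half-angle identity $2 - 2\cos\theta = 4\sin^2(\theta/2)$. Setting $\phi := \theta/2 \in [0,\pi/2]$, it suffices to show $|\,2\phi - 2\sin\phi\,| \leq 4 C_0 \sin^2\phi$, i.e. $\phi - \sin\phi \leq 2 C_0 \sin^2\phi$ on $[0,\pi/2]$ (the quantity $\phi - \sin\phi$ is nonnegative there, so the absolute value is free). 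Near $\phi = 0$ both sides vanish to matching order: $\phi - \sin\phi = \phi^3/6 + O(\phi^5)$ while $\sin^2\phi = \phi^2 + O(\phi^4)$, so the ratio $(\phi - \sin\phi)/\sin^2\phi \sim \phi/6 \to 0$, hence the ratio is bounded near $0$; on the compact interval $[\varepsilon, \pi/2]$ the denominator is bounded below by $\sin^2\varepsilon > 0$ and the numerator is bounded above, so the ratio is bounded there as well. Taking $C_0$ to be half the supremum of $(\phi - \sin\phi)/\sin^2\phi$ over $(0,\pi/2]$ (a finite number) finishes it. One can make $C_0$ explicit: since $\phi - \sin\phi \leq \phi^3/6$ and $\sin\phi \geq 2\phi/\pi$ on $[0,\pi/2]$, we get $\phi - \sin\phi \leq \frac{\phi^3}{6} \leq \frac{\pi^2}{24}\sin^2\phi$, so $C_0 = \pi^2/48$ works, though any absolute constant suffices for the paper.

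There is essentially no obstacle here; the only mild subtlety is making sure the endpoint $t = -1$ (i.e. $\theta = \pi$, antipodal points) is covered, where $\arccos(-1) = \pi$ and $\sqrt{2-2(-1)} = 2$, giving $|\pi - 2| \approx 1.14$ against $C_0 \cdot 4$, which is fine for $C_0 \geq 1/3$ — consistent with the bound derived above. An alternative, slightly slicker route avoiding case analysis is to note that $f(t) := \sqrt{2-2t} - \arccos(t)$ satisfies $f(1) = 0$ and to bound $f$ via its derivative, but the half-angle substitution above is the cleanest and I would go with that.
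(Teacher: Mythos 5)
Your reduction to the scalar inequality $|\arccos(t) - \sqrt{2-2t}| \leq C_0(2-2t)$ on $[-1,1]$ is exactly the paper's first step, but from there you take a genuinely different route. The paper differentiates $\arccos(x) - \sqrt{2-2x}$, shows the derivative $\frac{1}{\sqrt{1-x}}\bigl(\frac{1}{\sqrt{2}} - \frac{1}{\sqrt{1+x}}\bigr)$ extends continuously across $x=1$ with limit $0$, and invokes the mean value theorem to get $|\arccos(x) - \sqrt{2-2x}| \leq M(1-x)$ on $[0,1]$. You instead substitute $t = \cos 2\phi$ to reduce to the cleaner trigonometric inequality $\phi - \sin\phi \leq 2C_0 \sin^2\phi$ on $[0,\pi/2]$, and prove boundedness of the ratio by a Taylor expansion near $0$ plus compactness away from $0$. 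Both are valid elementary calculus arguments; yours sidesteps the singularity issue for the derivative that the paper has to address via an L'H\^opital limit, which is arguably a bit slicker.

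One small arithmetic slip in your ``explicit constant'' bonus: from $\phi - \sin\phi \leq \phi^3/6$ and $\sin\phi \geq 2\phi/\pi$ you actually get $(\phi - \sin\phi)/\sin^2\phi \leq \pi^2\phi/24 \leq \pi^3/48$, which gives $C_0 = \pi^3/96 \approx 0.32$, not $\pi^2/48 \approx 0.21$ (you dropped the factor $\phi \leq \pi/2$). This matters: at the antipodal endpoint $\theta = \pi$ you need $C_0 \geq (\pi-2)/4 \approx 0.285$, so $\pi^2/48$ in fact fails there, and your own sanity check ``consistent with the bound derived above'' is not consistent. None of this affects the main existence argument, which is complete as written.
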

\begin{proof}
    We note that $\|w - z\|_2^2 = 2 - 2 \langle w,z\rangle$ and $\pi d_{\mathbb{S}^{n-1}}(w,z) = \arccos(\langle w,z \rangle)$. Thus, this statement reduces to show that there exists an absolute constant $C_0$ such that for all $x \in [-1,1]$,
    \begin{align}
    \label{eqn:arccosBound}
    \left|\arccos(x) - \sqrt{2 - 2x}\right| \leq C_0(2 - 2x).
    \end{align}
    To see this, we note that the function inside the absolute value is differentiable on $]-1,1[$ with derivative equal to $\frac{-1}{\sqrt{1 - x^2}} + \frac{1}{\sqrt{2 - 2x}}$. In this range, we can simplify this expression to 

    $$\frac{1}{\sqrt{1-x}}\left(-\frac{1}{\sqrt{1+x}} + \frac{1}{\sqrt{2}}\right).$$
    The claim is that $\lim_{x \uparrow 1} \frac{1}{\sqrt{1-x}}\left(-\frac{1}{\sqrt{1+x}} + \frac{1}{\sqrt{2}}\right) = 0$. To see this, one can look at the square of this expression $\frac{\left(-\frac{1}{\sqrt{1+x}} + \frac{1}{\sqrt{2}}\right)^2}{1-x}$ and then use L'Hôpital's rule to see that the limit is equal to 0 as $x \to 1$. It follows that the derivative of $\arccos(x) - \sqrt{2 - 2x}$ can be extended to a continuous function on $[0,1]$. For $x \in [-1,0]$, \eqref{eqn:arccosBound} is obvious by possibly increasing the absolute constant. On $[0,1]$ the absolute value of the derivative is continuous and therefore attains a maximal value $M$. It follows by the mean value theorem that 
$$|\arccos(x) - \sqrt{2 - 2x}| \leq M (1 - x).$$
Thereby, the statement is proven. 
\end{proof}
\sjoerd{The following inequality is a straightforward consequence of the mean value theorem.
\begin{lemma}
\label{lem:lambdaEst}
For any $r>0$ and $\lambda>0$,
$$\left|\frac{1}{\sqrt{r^2 + \lambda^2}} - \frac{1}{\lambda}\right| \leq r^2 \lambda^{-3}.$$
\end{lemma}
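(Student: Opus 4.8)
The plan is to recognize the left-hand side as the increment of a smooth one-variable function and control it via the mean value theorem, exactly as the statement of the lemma advertises. Concretely, fix $\lambda>0$ and consider $\phi:[0,\infty)\to\R$ defined by $\phi(t)=(t+\lambda^2)^{-1/2}$. Then the quantity to be estimated is $|\phi(r^2)-\phi(0)|$, and $\phi$ is differentiable on $(0,\infty)$ with $\phi'(t)=-\tfrac12(t+\lambda^2)^{-3/2}$.

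First I would note that $|\phi'(t)|=\tfrac12(t+\lambda^2)^{-3/2}\le \tfrac12\lambda^{-3}$ for every $t\ge 0$, since $t+\lambda^2\ge\lambda^2$. Then, applying the mean value theorem on the interval $[0,r^2]$, there is some $\xi\in(0,r^2)$ with $\phi(r^2)-\phi(0)=\phi'(\xi)\,r^2$, whence
$$\left|\frac{1}{\sqrt{r^2+\lambda^2}}-\frac{1}{\lambda}\right|=|\phi(r^2)-\phi(0)|=|\phi'(\xi)|\,r^2\le \tfrac12\lambda^{-3}r^2\le r^2\lambda^{-3},$$
which is the claimed bound (in fact with room to spare). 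There is essentially no obstacle here: the only minor point to be careful about is that $\phi$ is only differentiable on the open interval, but it is continuous on $[0,r^2]$, so the mean value theorem applies without issue; alternatively one can integrate $\phi'$ directly. I would present the argument in two or three lines.
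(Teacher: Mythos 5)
Your proof is correct and takes essentially the same approach as the paper: both rely on the mean value theorem applied to a square‑root function. The paper first normalizes by $r$ to reduce to a one‑parameter inequality for $f(x)=1/\sqrt{1+x}$, whereas you apply the mean value theorem directly to $\phi(t)=(t+\lambda^2)^{-1/2}$ on $[0,r^2]$, which is a bit more streamlined and avoids the rescaling step. (As a minor remark, $\phi$ is in fact smooth on all of $[0,\infty)$ since $\lambda>0$, so the caveat about differentiability only on the open interval is unnecessary.)
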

\begin{proof}
Observe that 
$$\left|\frac{1}{\sqrt{r^2 + \lambda^2}} - \frac{1}{\lambda}\right| \leq \frac{1}{r} \left|\frac{1}{ \sqrt{1 + (\frac{\lambda}{r})^2}} - \frac{1}{\frac{\lambda}{r}}\right|.$$
Hence, it suffices to show for all $\mu>0$ that 
\begin{align*}
\left|\frac{1}{\sqrt{1 + \mu^2}} - \frac{1}{\mu}\right| \leq \mu^{-3}
\qquad \text{ or, equivalently, }\qquad 
\left|\frac{1}{\sqrt{1 + \mu^{-2}}} - 1\right| \leq \mu^{-2}.
\end{align*} 
By applying the mean value theorem to $f(x) := \frac{1}{\sqrt{1 + x}}$ shows that for any $x > 0$ there is some $\theta \in [0,x]$ so that
    $\left|\frac{1}{\sqrt{1 + x}} - 1\right| \leq |f'(\theta)|x\leq \frac{1}{2}x.$
Setting $x = \mu^{-2}$ yields the desired inequality. 
\end{proof}
}

\end{document}